\numberwithin{equation}{section}
\newtheorem{theorem}{Theorem}[section]
\newtheorem{lemma}{Lemma}[section]
\theoremstyle{definition}
\theoremstyle{remark}
\author{A. A. Kon'kov}
\address{Department of Differential Equations,
Faculty of Mechanics and Mathematics,
Mo\-s\-cow Lo\-mo\-no\-sov State University,
Vorobyovy Gory,
119992 Moscow, Russia}
\email{konkov@mech.math.msu.su}
\title{On Blow-up Conditions for Solutions of Differential Inequalities with $\varphi$-Laplacian}
\keywords{Monlinear inequalities, $\varphi$-Laplacian, Unbounded domains}
\date{}
\begin{document}

\begin{abstract}
Let $\Omega \ne \emptyset$ be an unbounded open subset of ${\mathbb R}^n$, $n \ge 2$.
We obtain blow-up conditions for non-negative solutions of the problem
$$
    {\mathcal L}_\varphi u
    \ge
    F (x, u)
    \quad
    \mbox{in }
    \Omega,
    \quad
    \left.
        u
    \right|_{
        \partial \Omega
    }
    =
    0,
$$
where $\varphi$ and $F$ are some function and
$$
    {\mathcal L}_\varphi u
    =
    \operatorname{div}
    \left(
        \frac{
            \varphi (|\nabla u|)
        }{
            |\nabla u|
        }
        \nabla u
    \right)
$$
is the $\varphi$-Laplace operator.
\end{abstract}






\maketitle

\section{Introduction}

Let $\Omega \ne \emptyset$ be an unbounded open subset of ${\mathbb R}^n$.
We study solutions of the inequality
\begin{equation}
    {\mathcal L}_\varphi u
    \ge
    F (x, u)
    \quad
    \mbox{in }
    \Omega,
    \label{1.1}
\end{equation}
satisfying the condition
\begin{equation}
    \left.
        u
    \right|_{
        \partial \Omega
    }
    =
    0,
    \label{1.2}
\end{equation}
where $F$ is a non-negative function and
\begin{equation}
    {\mathcal L}_\varphi u
    =
    \operatorname{div}
    \left(
        \frac{
            \varphi (|\nabla u|)
        }{
            |\nabla u|
        }
        \nabla u
    \right)
    \label{1.3}
\end{equation}
is the $\varphi$-Laplace operator with some increasing one-to-one function
$\varphi : [0, \infty) \to [0, \infty)$
such that
\begin{equation}
    \left(
        \frac{
            \varphi (|\xi|)
        }{
            |\xi|
        }
        \xi
        -
        \frac{
            \varphi (|\zeta|)
        }{
            |\zeta|
        }
        \zeta
    \right)
    (\xi - \zeta)
    >
    0
    \label{1.4}
\end{equation}
for all $\xi, \zeta \in {\mathbb R}^n$, $\xi \ne \zeta$. As is
customary, by $B_r$ and $S_r$ we mean the open ball  and the sphere
in ${\mathbb R}^n$ of radius $r > 0$ centered at zero. Let us denote
$
    \Omega_{r, \sigma}
    =
    \Omega
    \cap
    B_{\sigma r}
    \setminus
    \overline{
        B_{r / \sigma}
    },
$ $r > 0$, $\sigma > 1$. Following~\cite{LU}, we say that $f \in
W_{p, loc}^1 (\Omega)$, $p \ge 1$, if $f \in W_p^1 (\Omega \cap
B_r)$ for any real number $r > 0$. The space $L_{p, loc} (\Omega)$,
$p \ge 1$, is defined in a similar way.

The phenomenon of the absence of solutions of differential
equations and inequalities also known as blow-up has been
intensively studied by many mathematicians~[1--6, 8--15, 17].
Among the inequalities with nonlinearity in the principal part,
inequalities containing the $\varphi$-Laplacian are most general.
This operator was introduced by J.~Serrin. It appears in various
applications, for instants, in the theory of Riemannian
manifolds~\cite{PRS}.

In the paper presented to your attention, we manage to  obtain
sufficient blow-up conditions for solutions of inequalities with the
$\varphi$-Laplacian and a nonlinearity of a very general form in the
right-hand side. The exactness of these conditions is shown in
Examples~1 and~2.

A function $u \in W_{1, loc}^1 (\Omega) \cap L_{\infty, loc}
(\Omega)$ is called a weak solution of~\eqref{1.1} if $
    F (x, u) \in L_{1, loc} (\Omega),
$
$
    \varphi (|\nabla u|) |\nabla u| \in L_{1, loc} (\Omega),
$
and
$$
    -
    \int_\Omega
    \frac{
        \varphi (|\nabla u|)
    }{
        |\nabla u|
    }
    \nabla u \nabla \psi
    \,
    dx
    \ge
    \int_\Omega
    F (x, u)
    \psi
    \,
    dx
$$
for any non-negative function
$
    \psi
    \in
    {
        \stackrel{\rm \scriptscriptstyle o}{W}\!\!{}_1^1
        (
            \Omega
        )
    }
    \cap
    L_\infty (\Omega)
$
with a compact support such that
\begin{equation}
    \int_\Omega
    \varphi (|\nabla u|)
    |\nabla \psi|
    \,
    dx
    <
    \infty.
    \label{1.6}
\end{equation}
Analogously, a function $u \in W_{1, loc}^1 (\Omega)  \cap
L_{\infty, loc} (\Omega)$ is a weak solution of the inequality
$$
    {\mathcal L}_\varphi u
    \le
    F (x, u)
    \quad
    \mbox{in }
    \Omega
$$
if
$
    F (x, u) \in L_{1, loc} (\Omega),
$
$
    \varphi (|\nabla u|) |\nabla u| \in L_{1, loc} (\Omega),
$
and
$$
    -
    \int_\Omega
    \frac{
        \varphi (|\nabla u|)
    }{
        |\nabla u|
    }
    \nabla u \nabla \psi
    \,
    dx
    \le
    \int_\Omega
    F (x, u)
    \psi
    \,
    dx
$$
for any non-negative function
$
    \psi
    \in
    {
        \stackrel{\rm \scriptscriptstyle o}{W}\!\!{}_1^1
        (
            \Omega
        )
    }
    \cap
    L_\infty (\Omega)
$
with a compact support such that~\eqref{1.6} holds.
In its turn, a function $u \in W_{1, loc}^1 (\Omega)  \cap
L_{\infty, loc} (\Omega)$ is called a weak solution of the equation
$$
    {\mathcal L}_\varphi u
    =
    F (x, u)
    \quad
    \mbox{in }
    \Omega
$$
if
$
    F (x, u) \in L_{1, loc} (\Omega),
$
$
    \varphi (|\nabla u|) |\nabla u| \in L_{1, loc} (\Omega),
$
and
$$
    -
    \int_\Omega
    \frac{
        \varphi (|\nabla u|)
    }{
        |\nabla u|
    }
    \nabla u \nabla \psi
    \,
    dx
    =
    \int_\Omega
    F (x, u)
    \psi
    \,
    dx
$$
for any function
$
    \psi
    \in
    {
        \stackrel{\rm \scriptscriptstyle o}{W}\!\!{}_1^1
        (
            \Omega
        )
    }
    \cap
    L_\infty (\Omega)
$
with a compact support satisfying relation~\eqref{1.6}.

Condition~\eqref{1.2} means that
$
    \gamma u
    \in
    {
        \stackrel{\rm \scriptscriptstyle o}{W}\!\!{}_1^1
        (
            \Omega
        )
    }
$
for all $\gamma \in C_0^\infty ({\mathbb R}^n)$.
Note that, for $\Omega = {\mathbb R}^n$, this condition is obviously valid.

\medskip

\noindent {\bf Remark 1.} We understand the support of the function
$\psi$ in the sense of distributions from ${\mathcal D}' ({\mathbb
R}^n)$. In so doing, we extend $\psi$ by zero outside the set
$\Omega$. The inclusion $
    \varphi (|\nabla u|) |\nabla u| \in L_{1, loc} (\Omega),
$
obviously, implies that
$
    \varphi (|\nabla u|) \in L_{1, loc} (\Omega).
$
Indeed, for any real number $r > 0$ we have
$$
    \int_{\Omega \cap B_r}
    \varphi (|\nabla u|)
    \,
    dx
    =
    \int_{
        \{ x \in \Omega \cap B_r : |\nabla u (x)| \le 1 \}
    }
    \varphi (|\nabla u|)
    \,
    dx
    +
    \int_{
        \{ x \in \Omega \cap B_r : |\nabla u (x)| > 1 \}
    }
    \varphi (|\nabla u|)
    \,
    dx,
$$
where
$$
    \int_{
        \{ x \in \Omega \cap B_r : |\nabla u (x)| \le 1 \}
    }
    \varphi (|\nabla u|)
    \,
    dx
    \le
    \varphi (1)
    \operatorname{mes} \Omega \cap B_r
    <
    \infty
$$
and
$$
    \int_{
        \{ x \in \Omega \cap B_r : |\nabla u (x)| > 1 \}
    }
    \varphi (|\nabla u|)
    \,
    dx
    \le
    \int_{
        \{ x \in \Omega \cap B_r : |\nabla u (x)| > 1 \}
    }
    \varphi (|\nabla u|)
    |\nabla u|
    \,
    dx
    <
    \infty.
$$

\bigskip

A function $g : (0, \infty) \to (0, \infty)$ is called semi-multiplicative if
$
    g (t_1) g (t_2) \ge g (t_1 t_2)
$
for all $t_1, t_2 \in (0, \infty)$. In the case where the inequality
in the last formula  can be replaced by equality, the function $g$
is multiplicative. Using the isomorphism $t \mapsto e^t$ between the
additive group of real numbers and the multiplicative group of
positive reals, one can assert that a function $g$ is
semi-multiplicative if and only if $G (t) = \ln g (e^t)$ is
semi-additive, i.e.,
$$
    G (t_1) + G (t_2) \ge G (t_1 t_2)
$$
for all $t_1, t_2 \in {\mathbb R}$.
In so doing, a function $g : (0, \infty) \to (0, \infty)$ is almost semi-multiplicative if
\begin{equation}
    g (t_1) g (t_2) \ge c g (t_1 t_2)
    \label{1.7}
\end{equation}
with some constant $c > 0$ for all $t_1, t_2 \in (0, \infty)$.
In its turn, $G : {\mathbb R} \to {\mathbb R}$ is almost semi-additive function if
$
    G (t_1) + G (t_2) \ge G (t_1 t_2) - C
$
with some constant $C > 0$ for all $t_1, t_2 \in {\mathbb R}$. It
can easily be seen that a function $g$ is almost
semi-multiplicative if and only if $G (t) = \ln g (e^t)$ is almost
semi-additive.

The simplest example of an almost semi-multiplicative  function is
the power function $g (t) = t^\lambda$, $\lambda \in {\mathbb R}$.
This function is also multiplicative. As another example, we can
take $g (t) = t^\lambda \ln^\nu (2 + t)$, where $\lambda \ge 0$ and
$\nu \ge 0$ are real numbers.

Note that the product and superposition of almost
semi-multiplicative functions is also an almost semi-multiplicative
function. Taking $t_1 = \tau_1 \tau_2$ and $t_2 = 1 / \tau_2$
in~\eqref{1.7}, we obtain
\begin{equation}
    g (\tau_1 \tau_2)
    \ge
    \frac{
        c
        g (\tau_1)
    }{
        g (1 / \tau_2)
    }
    \label{1.8}
\end{equation}
for all real numbers $\tau_1 > 0$ and $\tau_2 > 0$.  According
to~\eqref{1.7} and~\eqref{1.8}, for any almost semi-multiplicative
function $g$ and real number $c > 0$ there exist constants $c_1 > 0$
and $c_2 > 0$ such that
$
    c_1 g (t)
    \le
    g (c t)
    \le
    c_2 g (t)
$
for all real numbers $t > 0$.

Below we assume that
$\Omega \cap S_r \ne \emptyset$ for all $r \ge r_0$, where $r_0 > 0$ is a real number.
In addition, let
\begin{equation}
    \inf_{
        x \in \Omega_{r, \sigma}
    }
    F (x, t)
    \ge
    f (r)
    g (t)
    \label{1.5}
\end{equation}
with some real number $\sigma > 1$
for all $r \ge r_0$ and $t > 0$, where
$f \in L_{\infty, loc} ([r_0, \infty))$
and
$g\in L_{\infty, loc} (0, \infty)$
are non-negative functions with
$
    \inf_K g > 0
$
for any compact set $K \subset (0, \infty)$. We also assume that the
function $g$ in~\eqref{1.5} and the  function inverse to $\varphi$
in~\eqref{1.3} are almost semi-multiplicative.

Denote
$$
    \eta (t)
    =
    \frac{
        t
    }{
        \varphi^{-1}
        \left(
            \frac{1}{t}
        \right)
    },
    \quad
    t > 0.
$$
It is easy to see that $\eta$ is an increasing  one-to-one mapping
of the set $(0, \infty)$ into itself. In so doing, since
$\varphi^{-1}$ is an almost semi-multiplicative function, for any
real number $c > 0$ there exist constants $c_1 > 0$ and $c_2 > 0$
such that
\begin{equation}
    c_1 \eta (t)
    \le
    \eta (c t)
    \le
    c_2 \eta (t)
    \label{1.10}
\end{equation}
and
\begin{equation}
    c_1 \eta^{-1} (t)
    \le
    \eta^{-1} (c t)
    \le
    c_2 \eta^{-1} (t)
    \label{1.11}
\end{equation}
for all real numbers $t > 0$.

\section{Main results}

\begin{theorem}\label{t2.1}
Let $\eta$ be a convex function and, moreover,
\begin{equation}
    \int_1^\infty
    \eta^{-1}
    \left(
        \frac{
            t
        }{
            \varphi^{-1} (g (t))
        }
    \right)
    \frac{dt}{t}
    <
    \infty
    \label{t2.1.1}
\end{equation}
and
\begin{equation}
    \int_{r_0}^\infty
    \eta
    \left(
        \frac{
            r
        }{
            \eta^{-1}
            \left(
                \varphi^{-1}
                \left(
                    \frac{1}{f (r)}
                \right)
            \right)
        }
    \right)
    \frac{dr}{r}
    =
    \infty.
    \label{t2.1.2}
\end{equation}
Then, any non-negative weak solution of~\eqref{1.1}, \eqref{1.2} is
identically equal to zero.
\end{theorem}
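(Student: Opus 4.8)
The plan is to argue by contradiction via a weighted integral (test-function) estimate on the annular regions $\Omega_{r,\sigma}$. Suppose $u$ is a non-negative weak solution of~\eqref{1.1}, \eqref{1.2} that is not identically zero; then there is a ball on which $\operatorname{ess\,sup} u \ge \varepsilon_0 > 0$, and by the structure hypothesis~\eqref{1.5} together with $\inf_K g > 0$, the right-hand side $F(x,u)$ dominates $f(r) g(\operatorname{ess\,inf}_{\Omega_{r,\sigma}} u)$ on each annulus where $u$ stays bounded below. First I would set up, for a smooth cutoff $\psi$ supported in $\Omega_{r,\sigma}$ and equal to $1$ on $\Omega_{r,\sigma'}$ for some $1 < \sigma' < \sigma$, the basic inequality obtained by inserting $\psi^\kappa$ (a suitable power) into the weak formulation: the left side is controlled by $\int \varphi(|\nabla u|)|\nabla\psi|\,\psi^{\kappa-1}$, which, using $|\nabla\psi| \lesssim 1/r$ and Young's inequality adapted to the pair $(\varphi, \Phi)$ where $\Phi(s)=\int_0^s\varphi$ is the Young conjugate complement, splits into a small multiple of $\int \varphi(|\nabla u|)|\nabla u|\psi^\kappa$ plus a term involving the conjugate of $\varphi$ evaluated at $1/r$. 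The key point is that this manufactures the function $\eta(t)=t/\varphi^{-1}(1/t)$: the "cost" of the cutoff on an annulus at scale $r$ is measured by $\eta(r/(\cdot))$.

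The second step is to run this on the dyadic-type annuli and convert the pointwise lower bound~\eqref{1.5} into a differential-inequality / ODE for the quantity $M(r) = \operatorname{ess\,inf}$ of $u$ over $\Omega_{r,\sigma}$, or rather for an averaged version of it. Concretely, I expect the estimate to take the form
$$
    g(M(r)) f(r) \, \mathrm{mes}(\Omega_{r,\sigma})
    \le
    C \, \mathrm{mes}(\Omega_{r,\sigma}) \,
    \varphi\!\left( \frac{\varphi^{-1}(g(M(\theta r)))\, \eta^{-1}(\text{something})}{r} \right)
$$
or, after inverting, a recursion bounding $M(\sigma^2 r)$ from below in terms of $M(r)$; chaining these and using that $\varphi^{-1}$ and $g$ are almost semi-multiplicative (so constants can be absorbed, cf.~\eqref{1.10}, \eqref{1.11}), one obtains that $M(r)$ cannot decay too fast. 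Then hypothesis~\eqref{t2.1.1}, the convergence of $\int_1^\infty \eta^{-1}(t/\varphi^{-1}(g(t)))\,dt/t$, is exactly what guarantees that starting from $M \ge \varepsilon_0 > 0$ the iteration keeps $M(r)$ bounded below by a positive constant for all $r$ — i.e. $u$ does not "spend itself" going out to infinity. The convexity of $\eta$ enters here through Jensen's inequality, allowing one to pass from the sum over annuli to an integral and to pull $\eta^{-1}$ outside an average.

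The final step is the contradiction with~\eqref{t2.1.2}. Once we know $\operatorname{ess\,inf}_{\Omega_{r,\sigma}} u \ge c_0 > 0$ for all large $r$, inequality~\eqref{1.5} gives $F(x,u) \ge c_1 f(r)$ on each annulus, and re-running the test-function estimate — now with no smallness left to exploit on the left side — produces
$$
    \int_{r_0}^\infty
    \eta\!\left(
        \frac{r}{\eta^{-1}(\varphi^{-1}(1/f(r)))}
    \right)
    \frac{dr}{r}
    \le C < \infty,
$$
contradicting~\eqref{t2.1.2}. I expect the main obstacle to be the second step: carrying the iteration cleanly through the $\varphi$-nonlinearity requires the right choice of the exponent $\kappa$ in $\psi^\kappa$ and a careful use of Young's inequality for the complementary pair associated with $\varphi$ (rather than for a fixed power), and one must verify that the almost-semi-multiplicativity of $g$ and $\varphi^{-1}$ genuinely lets all the accumulated multiplicative constants be absorbed without destroying the convergence/divergence dichotomy. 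The interplay between $\eta$, $\eta^{-1}$, $\varphi$, $\varphi^{-1}$ and the two integral conditions is delicate, and getting the arguments of $\eta^{-1}$ to match up in~\eqref{t2.1.1} is where the bookkeeping is heaviest; convexity of $\eta$ is the tool that makes that bookkeeping possible.
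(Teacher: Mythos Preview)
Your plan has a genuine structural gap. You track $M(r)=\operatorname{ess\,inf}_{\Omega_{r,\sigma}}u$ and try to show it stays bounded below; but $u$ vanishes on $\partial\Omega$, and in general $\partial\Omega$ meets every annulus $\Omega_{r,\sigma}$, so the essential infimum there is simply $0$. Even when $\Omega=\mathbb{R}^n$, $u$ is only a \emph{sub}solution ($\mathcal{L}_\varphi u\ge 0$), and neither the weak formulation with a cutoff $\psi^\kappa$ nor any Harnack-type input available here yields a pointwise lower bound on $u$. Consequently the recursion you describe---``$M(\sigma^2 r)$ bounded from below in terms of $M(r)$''---never gets off the ground, and the interpretation of~\eqref{t2.1.1} as a condition that ``keeps $M(r)$ bounded below by a positive constant'' is not what that hypothesis does.

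The paper's argument proceeds quite differently. It tracks the \emph{supremum} $M(r)=\operatorname*{ess\,sup}_{\Omega\cap B_r}u$ and obtains a \emph{lower} bound on its growth, not by test functions and Young's inequality, but by a comparison/barrier argument: one extends $u$ by zero outside $\Omega$ (a Kato-type lemma), constructs on each annulus $B_{r_2}\setminus B_{r_1}$ the explicit radial solution of $\mathcal{L}_\varphi w=\mathcal{F}$ with zero Cauchy data on $S_{r_1}$, and compares to conclude
\[
M(r_2)-M(r_1)\ \ge\ C\,(r_2-r_1)\,\varphi^{-1}\bigl((r_2-r_1)\,\mathcal{F}\bigr),
\]
with $\mathcal{F}$ the infimum of $F(x,t)$ over the relevant annulus and $t$-range. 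Splitting a dyadic sequence $r_i=\sigma^{i/2}r_*$ into ``slow-growth'' indices ($M(r_i)\le 2M(r_{i-1})$) and ``fast-growth'' indices, one converts these increments into integral inequalities in the $t$-variable; the convexity of $\eta$ enters via a Jensen-type lemma (applied on both sides) to pass between $\int\eta^{-1}(\cdot)\,dt/t$ and $\int dt/\varphi^{-1}(g(t))$. Summing over the two index sets produces a single inequality whose left side is finite by~\eqref{t2.1.1} and whose right side is infinite by~\eqref{t2.1.2}. So~\eqref{t2.1.1} is not a positivity-preserving condition for an infimum; it is the finiteness that the forced growth of the supremum contradicts.
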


\begin{theorem}\label{t2.2}
Let condition~\eqref{t2.1.1} be valid and, moreover,
\begin{equation}
    \int_1^\infty
    \frac{
        dt
    }{
        \varphi^{-1} (g (t))
    }
    <
    \infty
    \label{t2.2.1}
\end{equation}
and
\begin{equation}
    \int_{r_0}^\infty
    \min
    \left\{
        \frac{
            \eta (r)
        }{
            r
            \varphi^{-1}
            \left(
                \frac{
                    1
                }{
                    f (r)
                }
            \right)
        },
        \frac{
            1
        }{
            \eta^{-1}
            \left(
                \varphi^{-1}
                \left(
                    \frac{
                        1
                    }{
                        f (r)
                    }
                \right)
            \right)
        }
    \right\}
    dr
    =
    \infty.
    \label{t2.2.2}
\end{equation}
Then, any non-negative weak solution of~\eqref{1.1}, \eqref{1.2} is
identically equal to zero.
\end{theorem}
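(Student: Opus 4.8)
We argue by contradiction: suppose $u$ is a non-negative weak solution of~\eqref{1.1}, \eqref{1.2} with $u \not\equiv 0$. Since $u \ge 0$ is a subsolution of ${\mathcal L}_\varphi u \ge 0$, the maximum principle shows that $M (r) = \esssup_{\Omega \cap B_r} u$ is finite, non-decreasing, eventually positive, and that $\{ x : u (x) > 0 \}$ is unbounded; together with $\inf_K g > 0$ for compact $K \subset (0, \infty)$ and with the fact that~\eqref{t2.2.2} forces $f$ not to vanish a.e. near infinity, this yields $r_1 \ge r_0$ such that the mass
$$
    \mu (r) = \int_{\Omega \cap B_r} F (x, u) \, dx
$$
is strictly positive for $r \ge r_1$; it is also finite for each $r$ (as $F (x, u) \in L_{1, loc} (\Omega)$) and non-decreasing. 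Note, moreover, that~\eqref{t2.2.1} forces $g (t) \to \infty$ as $t \to \infty$, whence $\inf_{[M (r_1), \infty)} g > 0$, which we use freely below.

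The first, elementary, estimate is obtained by testing~\eqref{1.1} with a cutoff $\zeta \in C_0^\infty ({\mathbb R}^n)$ equal to $1$ on $B_r$, with $\operatorname{supp} \zeta \subset B_{\sigma r}$ and $|\nabla \zeta| \le C / r$: this is admissible since~\eqref{1.2} makes $\zeta u$, hence $\zeta$ as a multiplier, compatible with the boundary condition, and~\eqref{1.6} holds because $\varphi (|\nabla u|) \in L_{1, loc} (\Omega)$ (Remark~1). It gives $\mu (r) \le C r^{-1} \int_{\Omega_{r, \sigma}} \varphi (|\nabla u|) \, dx$. The second, technical, estimate bounds $\int_{\Omega_{r, \sigma}} \varphi (|\nabla u|) \, dx$ from above in terms of $r$, of $f$ on the annulus, and of $\mu$ at the comparable scales $r / \sigma$ and $\sigma^2 r$; one tests~\eqref{1.1} with a function of the form $\zeta^\lambda \Theta (u)$ for a suitable increasing $\Theta$ and $\lambda$ large, keeps the principal term via~\eqref{1.4}, replaces the (unavailable) Young inequality for powers by an inequality tailored to the pair $(\varphi, \varphi^{-1})$ and to $\eta$, and inserts an a priori pointwise bound $M (\sigma r) \le \theta (r, f)$ with $\theta$ built from $\eta^{-1}$ and $\varphi^{-1}$. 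This last bound is of Keller--Osserman type: it follows from~\eqref{t2.1.1}, \eqref{t2.2.1} and a comparison with radial supersolutions of ${\mathcal L}_\varphi v = f (\,\cdot\,) g (v)$, legitimate by~\eqref{1.4}. It is exactly here that Theorem~\ref{t2.1} uses the convexity of $\eta$, and that Theorem~\ref{t2.2} trades convexity for~\eqref{t2.2.1}. Throughout, the almost semi-multiplicativity of $\varphi^{-1}$ and $g$, via~\eqref{1.10}--\eqref{1.11}, keeps every constant independent of the scale.

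Eliminating $\int_{\Omega_{r, \sigma}} \varphi (|\nabla u|) \, dx$ between the two estimates gives, for $r \ge r_1$, a closed relation $\mu (r) \le C \, \omega \bigl( r, f (r), \mu (\sigma^2 r) \bigr)$, where the profile $\omega$ is assembled from $\eta$, $\varphi^{-1}$ and $g$. Splitting according to whether the length $r$ or the length $\eta^{-1} (\varphi^{-1} (1 / f (r)))$ dominates — the origin of the two entries of the $\min$ in~\eqref{t2.2.2} — and inverting, one arrives, for a fixed increasing function $\Phi$, at a difference inequality which, after summing the per-annulus estimates over $\Omega_{\sigma^{2 k} r_1, \sigma}$ (equivalently, after testing~\eqref{1.1} with suitably integrated cutoffs so that $\int f \, dr$ rather than point values of $f$ appears), reads
$$
    \Phi (\mu (\sigma^{2 (k + 1)} r_1)) - \Phi (\mu (\sigma^{2 k} r_1))
    \ge
    c
    \int_{\sigma^{2 k} r_1}^{\sigma^{2 (k + 1)} r_1}
    \min
    \left\{
        \frac{\eta (r)}{r \, \varphi^{-1} (1 / f (r))},
        \;
        \frac{1}{\eta^{-1} (\varphi^{-1} (1 / f (r)))}
    \right\}
    dr .
$$
Here $\Phi$ is built from $\eta$ and $g$ so as to have finite total variation on $(0, \infty)$, which is precisely condition~\eqref{t2.1.1}; in particular $\Phi \circ \mu$ is bounded. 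Summing in $k$, the left-hand side stays bounded while, by~\eqref{t2.2.2}, the right-hand side diverges — a contradiction. Hence $u \equiv 0$.

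The main obstacle is the second (gradient-energy) estimate and the bookkeeping around it: without homogeneity one must construct by hand the Young-type inequality in terms of $\varphi$, $\varphi^{-1}$ and $\eta$ whose remainder reproduces the integrand of~\eqref{t2.2.2}, extract from~\eqref{t2.1.1}--\eqref{t2.2.1} the pointwise bound on $M$ in the absence of convexity, and — the truly delicate part — ensure that iterating over infinitely many annuli does not allow the multiplicative constants to accumulate, which is the sole reason for imposing the almost semi-multiplicativity hypotheses. Passing from the per-scale inequalities to the integral (rather than logarithmic) form of~\eqref{t2.2.2} and handling the two regimes correctly are the other delicate points; checking that the cutoffs belong to $\stackrel{\rm \scriptscriptstyle o}{W}\!\!{}_1^1 (\Omega) \cap L_\infty (\Omega)$, have compact support, and satisfy~\eqref{1.6} is routine given~\eqref{1.2} and Remark~1.
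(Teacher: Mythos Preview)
Your proposal sketches an integral/capacity approach in the spirit of Mitidieri--Pohozaev: track the mass $\mu(r)=\int_{\Omega\cap B_r}F(x,u)\,dx$, test \eqref{1.1} with cutoffs $\zeta$ and with products $\zeta^\lambda\Theta(u)$, combine the two into a recursive inequality $\mu(r)\le C\,\omega(r,f(r),\mu(\sigma^2 r))$, invert, and sum. The paper does something entirely different. It works with $M(r)=\esssup_{\Omega\cap B_r}u$ and never tests with cutoffs; instead it extends $u$ by zero to ${\mathbb R}^n$ via a Kato-type lemma and, on each annulus, compares $u$ through the maximum principle (Lemma~\ref{l3.1}) with the explicit radial function solving $r^{1-n}\bigl(r^{n-1}\varphi(w')\bigr)'=\mathcal F$, obtaining $M(r_2)-M(r_1)\ge C(r_2-r_1)\,\varphi^{-1}\bigl((r_2-r_1)\mathcal F\bigr)$ (Lemma~\ref{l3.3}). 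The dyadic shells $r_i=\sigma^{i/2}r_*$ are then split into two regimes: when $M(r_i)\le 2M(r_{i-1})$ one gets $\int_{M(r_{i-1})}^{M(r_i)}\frac{dt}{\varphi^{-1}(g(t))}\ge C\int_{r_{i-1}}^{r_i}\frac{\eta(r)}{r\,\varphi^{-1}(1/f(r))}\,dr$ (Lemma~\ref{l3.6}); when $M(r_i)>2M(r_{i-1})$ one gets $\int_{M(r_{i-1})}^{M(r_i)}\eta^{-1}\bigl(\frac{t}{\varphi^{-1}(g(t))}\bigr)\frac{dt}{t}\ge C\int_{r_{i-1}}^{r_i}\frac{dr}{\eta^{-1}(\varphi^{-1}(1/f(r)))}$ (Lemma~\ref{l3.8}). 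Summed over all $i$, the two left-hand sides are exactly the integrals in \eqref{t2.2.1} and \eqref{t2.1.1} over $[M(r_*),\infty)$, hence finite; the combined right-hand side dominates the $\min$-integral in \eqref{t2.2.2}, hence diverges. The two entries of the $\min$ in \eqref{t2.2.2} arise precisely from this slow/fast dichotomy for $M$, not from a comparison of $r$ with $\eta^{-1}(\varphi^{-1}(1/f(r)))$ as you suggest.

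As written, your proposal is a programme rather than a proof, and the central step has a genuine gap. You never define $\Theta$, $\omega$, $\Phi$, the Keller--Osserman profile $\theta$, or the ``Young-type inequality tailored to $(\varphi,\varphi^{-1})$''; without them the difference inequality for $\Phi\circ\mu$ cannot even be checked. More seriously, the asserted link ``$\Phi$ has finite total variation, which is precisely condition~\eqref{t2.1.1}'' is not justified: \eqref{t2.1.1} and \eqref{t2.2.1} are integrability conditions in the variable $t$ ranging over the \emph{values} of $u$, whereas your $\Phi$ is applied to the mass $\mu(r)=\int F(x,u)\,dx$, which carries no direct pointwise information about $u$. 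It is not clear how the boundedness of $\Phi\circ\mu$ could be read off from \eqref{t2.1.1}--\eqref{t2.2.1} without first converting $\mu$ back into a statement about $M$ --- at which point one is essentially forced into the paper's comparison argument. In the paper this difficulty never arises because the left-hand integrals are literally $\int_{M(r_*)}^\infty(\cdots)\,dt$, so \eqref{t2.1.1} and \eqref{t2.2.1} apply verbatim.
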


\noindent {\bf Remark 2.} In the case of $f (r) = 0$, we assume by
definition that the integrands in the left-hand side
of~\eqref{t2.1.2} and~\eqref{t2.2.2} are equal to zero.


Theorems~\ref{t2.1} and~\ref{t2.2} are proved in Section~\ref{proofs}.
Now we demonstrate their exactness.

\medskip

\noindent {\bf Example 1.} Consider the inequality
\begin{equation}
    \Delta_p u \ge c (x) |u|^\lambda
    \quad
    \mbox{in } {\mathbb R}^n,
    \label{e2.1.1}
\end{equation}
where
$$
    \Delta_p u = \operatorname{div} (|\nabla u|^{p - 2} \nabla u),
    \quad
    p > 1,
$$
is the p-Laplacian and $c \in L_{\infty, loc} ({\mathbb R}^n)$ is a
non-negative function satisfying the relation
\begin{equation}
    c (x) \sim |x|^s
    \quad
    \mbox{as } x \to \infty,
    \label{e2.1.2}
\end{equation}
i.e.,
$
    c_1 |x|^s \le c (x) \le c_2 |x|^s
$
with some constants $c_1 > 0$ and $c_2 > 0$ for all $x$ in a neighborhood of infinity.

By Theorem~\ref{t2.1}, if
\begin{equation}
    \lambda > p - 1
    \quad
    \mbox{and}
    \quad
    s \ge - p,
    \label{e2.1.3}
\end{equation}
then any non-negative weak solution of~\eqref{e2.1.1} is identically equal to zero.
It can be shown that condition~\eqref{e2.1.3} is exact~\cite{meIM,meDM}.

\medskip

\noindent {\bf Example 2.} We examine the critical exponent $\lambda
= p - 1$ in~\eqref{e2.1.3}. Namely, consider the inequality
\begin{equation}
    {\mathcal L}_\varphi u
    \ge
    c (x) |u|^{p - 1}
    \quad
    \mbox{in } {\mathbb R}^n,
    \label{e2.2.1}
\end{equation}
where the increasing one-to-one function
$\varphi : [0, \infty) \to [0, \infty)$
such that
\begin{equation}
    \varphi (t)
    \sim
    t^{p - 1}
    \log^\nu t
    \quad
    \mbox{as } t \to \infty
    \quad
    \mbox{and}
    \quad
    \varphi (t)
    \sim
    t^{p - 1}
    \log^{- \nu} \frac{1}{t}
    \quad
    \mbox{as } t \to +0
    \label{e2.2.2}
\end{equation}
and $c \in L_{\infty, loc} ({\mathbb R}^n)$ is a  non-negative
function for which~\eqref{e2.1.2} holds. It is not difficult to
verify that the following relations are valid
$$
    \varphi^{-1} (t)
    \sim
    t^{1 / (p - 1)}
    \log^{- \nu / (p - 1)}
    t
    \quad
    \mbox{as } t \to \infty,
    \qquad
    \varphi^{-1} (t)
    \sim
    t^{1 / (p - 1)}
    \log^{\nu / (p - 1)}
    \frac{1}{t}
    \quad
    \mbox{as } t \to +0,
$$
$$
    \eta (t)
    \sim
    t^{p / (p - 1)}
    \log^{- \nu / (p - 1)}
    t
    \quad
    \mbox{as } t \to \infty,
    \qquad
    \eta (t)
    \sim
    t^{p / (p - 1)}
    \log^{\nu / (p - 1)}
    \frac{1}{t}
    \quad
    \mbox{as } t \to +0,
$$
$$
    \eta^{-1} (t)
    \sim
    t^{(p - 1) / p}
    \log^{\nu / p}
    t
    \quad
    \mbox{as } t \to \infty,
    \qquad
    \eta^{-1} (t)
    \sim
    t^{(p - 1) / p}
    \log^{- \nu / p}
    \frac{1}{t}
    \quad
    \mbox{as } t \to +0.
$$
Thus, applying Theorem~\ref{t2.2}, we obtain that, for
\begin{equation}
    \nu < -p
    \quad
    \mbox{and}
    \quad
    s \ge -p,
    \label{e2.2.3}
\end{equation}
any non-negative weak solution of~\eqref{e2.2.1} is identically equal to zero.
In so doing, if $\nu \ge -p$ and
\begin{equation}
    \varphi' (t)
    \sim
    t^{p - 2}
    \log^\nu t
    \quad
    \mbox{as } t \to \infty
    \label{e2.2.4}
\end{equation}
in addition to~\eqref{e2.2.2}, then for any $s \in {\mathbb R}$ the function
$$
    u (x)
    =
    \left\{
        \begin{aligned}
            &
            e^{
                e^{
                    |x|^{
                        (p + |s|) / p
                    }
                }
            }
            -
            e^{
                e^{
                    r_0^{
                        (p + |s|) / p
                    }
                }
            },
            &
            &
            |x| > r_0,
            \\
            &
            0,
            &
            &
            |x| \le r_0,
        \end{aligned}
    \right.
$$
where $r_0 > 0$ is large enough, is a weak solution
of~\eqref{e2.2.1} with some non-negative function $c \in L_{\infty,
loc} ({\mathbb R}^n)$, satisfying~\eqref{e2.1.2}. Hence, the first
inequality in~\eqref{e2.2.3} is exact.

Now, let $\nu < -p$ and $s < -p$. Then, assuming that~\eqref{e2.2.4}
holds  in addition to~\eqref{e2.2.2}, we obtain that
$$
    u (x)
    =
    \left\{
        \begin{aligned}
            &
            e^{
                |x|^{
                    (s + p) / (p + \nu)
                }
            }
            -
            e^{
                r_0^{
                    (s + p) / (p + \nu)
                }
            },
            &
            &
            |x| > r_0,
            \\
            &
            0,
            &
            &
            |x| \le r_0,
        \end{aligned}
    \right.
$$
where $r_0 > 0$ is large enough, is a weak solution
of~\eqref{e2.2.1}  with some non-negative function $c \in L_{\infty,
loc} ({\mathbb R}^n)$ satisfying~\eqref{e2.1.2}. Consequently, the
second inequality in~\eqref{e2.2.3} is also exact.

\section{Proof of Theorems~\ref{t2.1} and~\ref{t2.2}}\label{proofs}

We say that
\begin{equation}
    u_1
    \le
    u_2
    \quad
    \mbox{on } \partial \omega,
    \label{3.1}
\end{equation}
where $u_i \in W_{1, loc}^1 (\omega) \cap L_{\infty, loc} (\omega)$,
$i = 1, 2$, are some functions and $\omega$ is an open subset of
${\mathbb R}^n$, if $
    \gamma
    \max \{ u_1 - u_2, 0 \}
    \in
    {
        \stackrel{\rm \scriptscriptstyle o}{W}\!\!{}_1^1
        (
            \omega
        )
    }
$ for all $\gamma \in C_0^\infty ({\mathbb R}^n)$. If $\omega$ is a
bounded open subset of ${\mathbb R}^n$,  then condition~\eqref{3.1}
obviously means that $
    \max \{ u_1 - u_2, 0 \}
    \in
    {
        \stackrel{\rm \scriptscriptstyle o}{W}\!\!{}_1^1
        (
            \omega
        )
    }.
$

\begin{lemma}\label{l3.1}
Let $u_i \in W_1^1 (\omega) \cap L_\infty (\omega)$, $i = 1, 2$, be
weak solutions of the inequalities
\begin{equation}
    {\mathcal L}_\varphi u_1
    \ge
    \rho (x, u_1)
    \quad
    \mbox{in }
    \omega
    \label{l3.1.1}
\end{equation}
and
\begin{equation}
    {\mathcal L}_\varphi u_2
    \le
    \rho (x, u_2)
    \quad
    \mbox{in }
    \omega,
    \label{l3.1.2}
\end{equation}
satisfying condition~\eqref{3.1}, where $\omega$ is an open bounded
subset of ${\mathbb R}^n$  and $\rho$ is some function
non-decreasing with respect to the last argument. If $
    \varphi (|\nabla u_i|) |\nabla u_j| \in L_1 (\omega)
$
and
$
    \rho (x, u_i) u_j  \in L_1 (\omega),
$ $i,j = 1, 2$, then
\begin{equation}
    u_1 \le u_2
    \quad
    \mbox{a.e. in } \omega.
    \label{l3.1.3}
\end{equation}
\end{lemma}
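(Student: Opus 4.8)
The plan is to prove this weak comparison principle by the standard Díaz–Lindqvist trick of using the difference of the two weak formulations against a suitable truncation of the positive part of $u_1 - u_2$, and then invoking the strict monotonicity~\eqref{1.4} of the field $\xi \mapsto (\varphi(|\xi|)/|\xi|)\xi$. First I would set $w = \max\{u_1 - u_2, 0\}$ and observe that, by~\eqref{3.1} (since $\omega$ is bounded), $w \in \stackrel{\rm \scriptscriptstyle o}{W}\!\!{}_1^1(\omega)$; moreover $w \in L_\infty(\omega)$ because $u_1, u_2 \in L_\infty(\omega)$. The natural test function is $\psi = w$ itself, but to be admissible in the weak formulations it must satisfy the finite-energy condition~\eqref{1.6}, i.e. $\int_\omega \varphi(|\nabla u_i|)\,|\nabla w|\,dx < \infty$ for $i = 1, 2$; this is exactly where the hypothesis $\varphi(|\nabla u_i|)\,|\nabla u_j| \in L_1(\omega)$ enters, since $\nabla w = \nabla(u_1 - u_2)$ a.e. on $\{u_1 > u_2\}$ and vanishes elsewhere. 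With $\psi = w$ legitimate, I subtract~\eqref{l3.1.2} tested against $w$ from~\eqref{l3.1.1} tested against $w$ to obtain
\[
    -\int_\omega
    \left(
        \frac{\varphi(|\nabla u_1|)}{|\nabla u_1|}\nabla u_1
        -
        \frac{\varphi(|\nabla u_2|)}{|\nabla u_2|}\nabla u_2
    \right)\nabla w\,dx
    \ge
    \int_\omega
    \bigl(\rho(x, u_1) - \rho(x, u_2)\bigr)\,w\,dx .
\]

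Now I restrict attention to the set $E = \{x \in \omega : u_1(x) > u_2(x)\}$, on which $w = u_1 - u_2 > 0$ and $\nabla w = \nabla u_1 - \nabla u_2$, while on $\omega \setminus E$ both $w$ and $\nabla w$ vanish. On $E$ the integrand on the left becomes $-(A(\nabla u_1) - A(\nabla u_2))(\nabla u_1 - \nabla u_2)$, writing $A(\xi) = (\varphi(|\xi|)/|\xi|)\xi$, which by~\eqref{1.4} is $\le 0$, with strict inequality wherever $\nabla u_1 \ne \nabla u_2$. Hence the left-hand side is $\le 0$. On the other hand, since $\rho$ is non-decreasing in its last argument and $u_1 > u_2$ on $E$, the integrand on the right is $\ge 0$, so the right-hand side is $\ge 0$ (the integrability $\rho(x, u_i)u_j \in L_1(\omega)$ guarantees this integral is well defined and finite). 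Therefore both sides equal zero, which forces
\[
    \int_E
    \left(
        A(\nabla u_1) - A(\nabla u_2)
    \right)(\nabla u_1 - \nabla u_2)\,dx = 0 ,
\]
and since the integrand is $\ge 0$ it vanishes a.e. on $E$; by the \emph{strict} inequality in~\eqref{1.4} this means $\nabla u_1 = \nabla u_2$ a.e. on $E$, i.e. $\nabla w = 0$ a.e. in $\omega$. As $w \in \stackrel{\rm \scriptscriptstyle o}{W}\!\!{}_1^1(\omega)$ has zero gradient on the bounded open set $\omega$, Poincaré's inequality gives $w \equiv 0$, i.e.\ $u_1 \le u_2$ a.e.\ in $\omega$, which is~\eqref{l3.1.3}.

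The main obstacle, and the point that needs the most care, is verifying that $\psi = w$ is genuinely an admissible test function in both weak formulations — in particular that $w$ has compact support in the sense required (here it is enough that $\omega$ is bounded and $w \in \stackrel{\rm \scriptscriptstyle o}{W}\!\!{}_1^1(\omega) \cap L_\infty(\omega)$, so that it is a limit of $C_0^\infty(\omega)$ functions) and that condition~\eqref{1.6} holds. If one prefers not to lean on the bounded-support formulation directly, a cleaner route is to use the bounded truncations $\psi_k = \min\{w, k\}$, for which~\eqref{1.6} follows from $\varphi(|\nabla u_i|)\,|\nabla w| \in L_1(\omega)$ as above and $|\nabla \psi_k| \le |\nabla w|$; one derives the identity with $\psi_k$, notes that the left-hand integrand is still sign-definite on $\{0 < w < k\}$ and that $\{w \ge k\}$ contributes nothing to $\nabla \psi_k$, and passes to the limit $k \to \infty$ by monotone/dominated convergence using the $L_1$ hypotheses. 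Either way, once admissibility is secured the argument is the short monotonicity computation sketched above.
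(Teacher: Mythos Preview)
Your proof is correct and follows essentially the same route as the paper's own argument: test both weak formulations with $\psi=\max\{u_1-u_2,0\}\in\stackrel{\rm \scriptscriptstyle o}{W}\!\!{}_1^1(\omega)\cap L_\infty(\omega)$, subtract, use the monotonicity of $\rho$ to drop the right-hand side, and invoke the strict monotonicity~\eqref{1.4} to force $\nabla\psi=0$ and hence $\psi\equiv 0$. If anything, you are more explicit than the paper about why $\psi$ is admissible (checking~\eqref{1.6} via the hypothesis $\varphi(|\nabla u_i|)|\nabla u_j|\in L_1(\omega)$) and about the final Poincar\'e step, which the paper leaves implicit.
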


\begin{proof}
From~\eqref{l3.1.1} and~\eqref{l3.1.2}, it follows that
\begin{equation}
    -
    \int_{\tilde \omega}
    \frac{
        \varphi (|\nabla u_1|)
    }{
        |\nabla u_1|
    }
    \nabla u_1
    \nabla \psi
    \,
    dx
    \ge
    \int_{\tilde \omega}
    \rho (x, u_1) \psi
    \,
    dx
    \label{pl3.1.1}
\end{equation}
and
\begin{equation}
    -
    \int_{\tilde \omega}
    \frac{
        \varphi (|\nabla u_2|)
    }{
        |\nabla u_2|
    }
    \nabla u_2
    \nabla \psi
    \,
    dx
    \le
    \int_{\tilde \omega}
    \rho (x, u_2) \psi
    \,
    dx,
    \label{pl3.1.2}
\end{equation}
where
$
    \psi
    =
    \max \{ u_1 - u_2, 0 \}
$
and $\tilde \omega = \{ x \in \omega : u_1 (x) > u_2 (x) \}$.
Thus, subtracting~\eqref{pl3.1.1} from~\eqref{pl3.1.2}, we obtain
$$
    \int_{\tilde \omega}
    \left(
        \frac{
            \varphi (|\nabla u_1|)
        }{
            |\nabla u_1|
        }
        \nabla u_1
        -
        \frac{
            \varphi (|\nabla u_2|)
        }{
            |\nabla u_2|
        }
        \nabla u_2
    \right)
    (\nabla u_1 - \nabla u_2)
    \,
    dx
    \le
    0.
$$
In view of~\eqref{1.4}, this readily implies~\eqref{l3.1.3}.
\end{proof}

We need the following generalization  of well-known Kato
theorem~\cite{Kato}.

\begin{lemma}\label{l3.2}
Let $u \in W_{1, loc}^1 (\omega) \cap L_{\infty, loc} (\omega)$ be a weak solution of the problem
\begin{equation}
    {\mathcal L}_\varphi u
    \ge
    a (x)
    \quad
    \mbox{in }
    \omega,
    \quad
    u
    \le
    0
    \quad
    \mbox{on } \partial \omega,
    \label{l3.2.1}
\end{equation}
where $\omega$ is an open subset  of ${\mathbb R}^n$ and $a \in
L_{1, loc} (\omega)$. We denote $\tilde \omega = \{ x \in \omega : u
(x) > 0 \}$ and
$$
    \tilde a (x)
    =
    \left\{
        \begin{aligned}
            &
            a (x)
            &
            &
            x \in \tilde \omega,
            \\
            &
            0,
            &
            &
            x \in {\mathbb R}^n \setminus \tilde \omega.
        \end{aligned}
    \right.
$$
Then, the function
$$
    \tilde u (x)
    =
    \left\{
        \begin{aligned}
            &
            u (x),
            &
            &
            x \in \tilde \omega,
            \\
            &
            0,
            &
            &
            x \in {\mathbb R}^n \setminus \tilde \omega,
        \end{aligned}
    \right.
$$
is a weak solution of the inequality
$
    {\mathcal L}_\varphi \tilde u \ge \tilde a (x)
    \quad
    \mbox{in } {\mathbb R}^n.
$
\end{lemma}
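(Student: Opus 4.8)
The plan is to argue in the spirit of Kato's lemma, the one new point being that hypothesis~\eqref{1.4} makes the ``cross term'' that arises in the computation non-negative without any further restriction on $\varphi$. Observe that $\tilde u$ is just $\max\{u,0\}$ extended by zero, so that $\nabla\tilde u=\chi_{\tilde\omega}\nabla u$ a.e.\ and, because $\varphi$ is one-to-one with $\varphi(0)=0$, also $\varphi(|\nabla\tilde u|)|\nabla\tilde u|=\chi_{\tilde\omega}\varphi(|\nabla u|)|\nabla u|$ and $\frac{\varphi(|\nabla\tilde u|)}{|\nabla\tilde u|}\nabla\tilde u=\chi_{\tilde\omega}\frac{\varphi(|\nabla u|)}{|\nabla u|}\nabla u$ a.e. The boundary condition in~\eqref{l3.2.1}, i.e.\ $\gamma\max\{u,0\}\in{\stackrel{\rm \scriptscriptstyle o}{W}\!\!{}_1^1(\omega)}$ for every $\gamma\in C_0^\infty({\mathbb R}^n)$, guarantees (extension by zero takes ${\stackrel{\rm \scriptscriptstyle o}{W}\!\!{}_1^1(\omega)}$ into $W_1^1({\mathbb R}^n)$, applied to all such $\gamma\max\{u,0\}$) that $\tilde u\in W_{1,loc}^1({\mathbb R}^n)\cap L_{\infty,loc}({\mathbb R}^n)$, and, together with $a\in L_{1,loc}(\omega)$ and $\varphi(|\nabla u|)|\nabla u|\in L_{1,loc}(\omega)$, that $\tilde a\in L_{1,loc}({\mathbb R}^n)$ and $\varphi(|\nabla\tilde u|)|\nabla\tilde u|\in L_{1,loc}({\mathbb R}^n)$. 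By the identities above, the assertion of the lemma then reduces to the inequality $-\int_{\tilde\omega}\frac{\varphi(|\nabla u|)}{|\nabla u|}\nabla u\cdot\nabla\psi\,dx\ge\int_{\tilde\omega}a\psi\,dx$ for every non-negative $\psi\in{\stackrel{\rm \scriptscriptstyle o}{W}\!\!{}_1^1({\mathbb R}^n)}\cap L_\infty({\mathbb R}^n)$ with compact support such that $\int_{\tilde\omega}\varphi(|\nabla u|)|\nabla\psi|\,dx<\infty$.

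To prove this I would substitute into the weak formulation of~\eqref{l3.2.1} the test function $\psi_\varepsilon=\psi\,\mu_\varepsilon(u)$, where $\mu_\varepsilon\colon{\mathbb R}\to[0,1]$ ($\varepsilon>0$) is Lipschitz, vanishes on $(-\infty,\varepsilon]$, equals $1$ on $[2\varepsilon,\infty)$ and is affine on $[\varepsilon,2\varepsilon]$. The function $\psi_\varepsilon$ is non-negative with compact support; since $\mu_\varepsilon(u)=\mu_\varepsilon(\max\{u,0\})$, $\mu_\varepsilon$ is Lipschitz and $\mu_\varepsilon(0)=0$, taking $\gamma\in C_0^\infty({\mathbb R}^n)$ equal to $1$ near $\operatorname{supp}\psi$ shows $\mu_\varepsilon(\gamma\max\{u,0\})\in{\stackrel{\rm \scriptscriptstyle o}{W}\!\!{}_1^1(\omega)}\cap L_\infty(\omega)$ and hence $\psi_\varepsilon\in{\stackrel{\rm \scriptscriptstyle o}{W}\!\!{}_1^1(\omega)}$; finally, writing $\nabla\psi_\varepsilon=\mu_\varepsilon(u)\nabla\psi+\psi\,\mu_\varepsilon'(u)\nabla u$, condition~\eqref{1.6} holds for $\psi_\varepsilon$ because $\mu_\varepsilon(u)\varphi(|\nabla u|)|\nabla\psi|\le\chi_{\tilde\omega}\varphi(|\nabla u|)|\nabla\psi|\in L_1$, while the summand $\psi\,\mu_\varepsilon'(u)\nabla u$ is supported on $\{\varepsilon<u<2\varepsilon\}\cap\operatorname{supp}\psi$, a subset of $\omega$ on which $\varphi(|\nabla u|)|\nabla u|$ is integrable.

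Expanding the gradient in $-\int_\omega\frac{\varphi(|\nabla u|)}{|\nabla u|}\nabla u\cdot\nabla\psi_\varepsilon\,dx\ge\int_\omega a\,\psi_\varepsilon\,dx$ and using $\frac{\varphi(|\nabla u|)}{|\nabla u|}|\nabla u|^2=\varphi(|\nabla u|)|\nabla u|$ yields
$$
    -
    \int_\omega
    \frac{\varphi(|\nabla u|)}{|\nabla u|}\nabla u\cdot\nabla\psi\;\mu_\varepsilon(u)\,dx
    -
    \int_\omega
    \varphi(|\nabla u|)|\nabla u|\,\psi\,\mu_\varepsilon'(u)\,dx
    \ge
    \int_\omega
    a\,\psi\,\mu_\varepsilon(u)\,dx .
$$
As the second integral on the left is non-negative ($\psi\ge0$, $\mu_\varepsilon'\ge0$), discarding it only enlarges the left-hand side, so $-\int_\omega\frac{\varphi(|\nabla u|)}{|\nabla u|}\nabla u\cdot\nabla\psi\,\mu_\varepsilon(u)\,dx\ge\int_\omega a\,\psi\,\mu_\varepsilon(u)\,dx$. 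Now let $\varepsilon\to0^+$: since $0\le\mu_\varepsilon(u)\le1$, $\mu_\varepsilon(u)$ is supported in $\tilde\omega$, and $\mu_\varepsilon(u)\to\chi_{\tilde\omega}$ pointwise, the left-hand side tends to $-\int_{\tilde\omega}\frac{\varphi(|\nabla u|)}{|\nabla u|}\nabla u\cdot\nabla\psi\,dx$ by dominated convergence (dominant $\chi_{\tilde\omega}\varphi(|\nabla u|)|\nabla\psi|$); for the right-hand side one first takes $\psi$ to be a cut-off function and applies Fatou's lemma (or monotone convergence, as $\chi_{\{u>\varepsilon\}}\uparrow\chi_{\tilde\omega}$) to obtain $\tilde a\in L_{1,loc}({\mathbb R}^n)$, and then passes to the limit by dominated convergence in the general case. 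This gives the desired inequality and completes the argument.

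I expect the main difficulty to be the admissibility check of the second paragraph — concretely, controlling the ``transition'' term $\psi\,\mu_\varepsilon'(u)\nabla u$ and the quantity $\int_{\tilde\omega}a\psi$ near $\partial\omega$ — for this is precisely the place where the exact meaning of the boundary condition $u\le0$ on $\partial\omega$, namely $\gamma\max\{u,0\}\in{\stackrel{\rm \scriptscriptstyle o}{W}\!\!{}_1^1(\omega)}$ for all $\gamma\in C_0^\infty({\mathbb R}^n)$, has to be used in full. Once admissibility is secured, the sign of the cross term and the passage to the limit are routine.
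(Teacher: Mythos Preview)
Your proof is correct and follows essentially the same approach as the paper's: both insert the test function $\psi\cdot\mu_\varepsilon(u)$ (the paper uses a smooth cutoff $\zeta_\varepsilon$ in place of your piecewise-linear $\mu_\varepsilon$), discard the non-negative cross term $\int_\omega \varphi(|\nabla u|)\,|\nabla u|\,\psi\,\mu_\varepsilon'(u)\,dx$, and pass to the limit $\varepsilon\to0^+$. One small remark: the non-negativity of that cross term comes simply from $\varphi\ge0$, $\psi\ge0$ and $\mu_\varepsilon'\ge0$, not from hypothesis~\eqref{1.4}, which plays no role in this lemma.
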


\begin{proof}
The proof, with minor differences,  repeats the reasoning given in~\cite[Lemma~4.2]{meJMAA2007}. 
In order not to be
unfounded, we present this proof in full. Let us take a
non-decreasing function  $\zeta \in C^\infty ({\mathbb R})$ such
that $\zeta = 0$ on the interval  $(-\infty, 1/4]$ and $\zeta = 1$
on $[3/4, \infty)$. Put $
    \zeta_\varepsilon (t)
    =
    \zeta (t / \varepsilon),
$
where $\varepsilon > 0$ is a real number.
In view of~\eqref{l3.2.1}, we have
\begin{equation}
    -
    \int_\omega
    \frac{
        \varphi (|\nabla u|)
    }{
        |\nabla u|
    }
    \nabla u \nabla (\zeta_\varepsilon (u) \psi)
    \,
    dx
    \ge
    \int_\omega
    a (x)
    \zeta_\varepsilon (u) \psi
    \,
    dx
    \label{pl3.2.1}
\end{equation}
for any non-negative function
$
    \psi
    \in
    {
        \stackrel{\rm \scriptscriptstyle o}{W}\!\!{}_1^1
        (
            \Omega
        )
    }
    \cap
    L_{\infty, loc} (\omega)
$
with a compact support such that
$$
    \int_\omega
    \varphi (|\nabla u|)
    |\nabla \psi|
    \,
    dx
    <
    \infty.
$$
Note that
$
    \varphi (|\nabla u|) |\nabla u| \in L_{1, loc} (\Omega)
$ by the definition of solutions of~\eqref{l3.2.1}; therefore,
$$
    \int_\omega
    \varphi (|\nabla u|)
    |\nabla (\zeta_\varepsilon (u) \psi)|
    \,
    dx
    <
    \infty.
$$
We also have
$
    \zeta_\varepsilon (u) \psi
    \in
    {
        \stackrel{\rm \scriptscriptstyle o}{W}\!\!{}_1^1
        (
            \omega
        )
    }
    \cap
    L_{\infty, loc} (\omega).
$
Hence, $\zeta_\varepsilon (u) \psi$ can be used as a test function in~\eqref{pl3.2.1}.

Expanding the brackets in the left-hand side of~\eqref{pl3.2.1}, we obtain
\begin{equation}
    -
    \int_\omega
    \zeta_\varepsilon (u)
    \frac{
        \varphi (|\nabla u|)
    }{
        |\nabla u|
    }
    \nabla u \nabla \psi
    \,
    dx
    \ge
    \int_\omega
    \varphi (|\nabla u|) |\nabla u|
    \zeta_\varepsilon' (u) \psi
    \,
    dx
    +
    \int_\omega
    a (x)
    \zeta_\varepsilon (u) \psi
    \,
    dx.
    \label{pl3.2.2}
\end{equation}
The first summand on the right in the last expression is non-negative, whereas
$$
    \int_\omega
    \zeta_\varepsilon (u)
    \frac{
        \varphi (|\nabla u|)
    }{
        |\nabla u|
    }
    \nabla u \nabla \psi
    \,
    dx
    \to
    \int_{
        {\mathbb R}^n
    }
    \frac{
        \varphi (|\nabla \tilde u|)
    }{
        |\nabla \tilde u|
    }
    \nabla \tilde u \nabla \psi
    \,
    dx
$$
and
$$
    \int_\omega
    a (x)
    \zeta_\varepsilon (u) \psi
    \,
    dx
    \to
    \int_{
        {\mathbb R}^n
    }
    \tilde a (x)
    \psi
    \,
    dx
$$
as $\varepsilon \to +0$. Thus, passing  to the limit
in~\eqref{pl3.2.2} as $\varepsilon \to +0$, we arrive at the
inequality
$$
    \int_{
        {\mathbb R}^n
    }
    \frac{
        \varphi (|\nabla \tilde u|)
    }{
        |\nabla \tilde u|
    }
    \nabla \tilde u \nabla \psi
    \,
    dx
    \ge
    \int_{
        {\mathbb R}^n
    }
    \tilde a (x)
    \psi
    \,
    dx.
$$
The proof is completed.
\end{proof}

From now on, we assume that $u$ is a non-negative  weak solution
of~\eqref{1.1}, \eqref{1.2}. Let us put
$$
    M (r)
    =
    \operatorname*{ess\,sup}\limits_{
        \Omega \cap B_r
    }
    u,
    \quad
    r > r_0.
$$
We obviously have $M (r) = M (r - 0)$ for all $r \in (r_0, \infty)$.
If $M$ is identically zero on $(r_0, \infty)$,  then $u$ is
identically zero a.e. in $\Omega$; therefore, it can be assumed that
$M (r_*) > 0$ for some $r_* \in (r_0, \infty)$.

Below, by $C$ we mean various positive constants  that can depend
only on $\sigma$, $g$, $\varphi$, $n$, and $M (r_*)$.

\begin{lemma}\label{l3.3}
Let $r_* \le r_1 < r_2$ be real numbers such that  $r_2 \le \sigma
r_1$ and $M (r_2) \le 2 M (r_1)$. Then,
\begin{equation}
    M (r_2) - M (r_1)
    \ge
    C
    (r_2 - r_1)
    \varphi^{-1}
    (
        (r_2 - r_1)
        {\mathcal F}
    ),
    \label{l3.3.1}
\end{equation}
where
\begin{equation}
    {\mathcal F}
    =
    \inf_{
        x \in \Omega \cap B_{r_2} \setminus B_{r_1},
        \;
        t \in [M (r_1) / 2, M (r_2)]
    }
    F (x, t).
    \label{l3.3.2}
\end{equation}
\end{lemma}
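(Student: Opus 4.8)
The plan is to produce a \emph{subsolution} that lies below $u$ on the annular region $\Omega\cap B_{r_2}\setminus B_{r_1}$ and whose size on $S_{r_2}$ is controlled from below by the right-hand side of~\eqref{l3.3.1}; then the comparison lemma (Lemma~\ref{l3.1}) forces $u$ to be at least that large somewhere on $S_{r_2}$, which gives $M(r_2)\ge M(r_1)+C(r_2-r_1)\varphi^{-1}((r_2-r_1){\mathcal F})$ up to the usual adjustments. Concretely, since $u$ satisfies~\eqref{1.1} with $F(x,u)\ge{\mathcal F}$ on $\Omega\cap B_{r_2}\setminus B_{r_1}$ whenever $u(x)\in[M(r_1)/2,M(r_2)]$, and since $M(r_1)=\operatorname*{ess\,sup}_{\Omega\cap B_{r_1}}u$ means $u$ is "large" near $S_{r_1}$ in the appropriate weak sense, I would compare $u$ on the bounded set $\omega=\Omega\cap B_{r_2}\setminus\overline{B_{r_1}}$ with a radial function $w(x)=W(|x|)$ built to satisfy ${\mathcal L}_\varphi w\le{\mathcal F}$ there, with $w\le u$ on $\partial\omega$.

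The radial supersolution is the key construction. For a radial $w(x)=W(\rho)$, $\rho=|x|$, one computes ${\mathcal L}_\varphi w=\rho^{1-n}\bigl(\rho^{n-1}\varphi(|W'|)\operatorname{sign}W'\bigr)'$. To force ${\mathcal L}_\varphi w\le{\mathcal F}$ with $W'<0$ it suffices to take $\varphi(-W'(\rho))$ bounded below by a multiple of ${\mathcal F}\,(\rho-r_1)$ on $[r_1,r_2]$ — more precisely, integrating the flux inequality from $r_1$ outward and using $r_2\le\sigma r_1$ so that $\rho^{n-1}$ is comparable to $r_1^{n-1}$ with constants depending only on $\sigma$ and $n$, one is led to choose $-W'(\rho)=\varphi^{-1}\bigl(C_0{\mathcal F}(\rho-r_1)\bigr)$ for a suitable $C_0=C_0(\sigma,n)$, and then $W(r_1)-W(r_2)=\int_{r_1}^{r_2}\varphi^{-1}(C_0{\mathcal F}(\rho-r_1))\,d\rho$. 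Because $\varphi^{-1}$ is almost semi-multiplicative and $\rho-r_1$ runs over $[0,r_2-r_1]$, this integral is bounded below by $C(r_2-r_1)\varphi^{-1}\bigl((r_2-r_1){\mathcal F}\bigr)$ using~\eqref{1.8}-type estimates (the factor $\rho-r_1$ is $\ge(r_2-r_1)/2$ on the outer half of the annulus, where it contributes a full comparable amount). Setting $W(r_1)=M(r_1)/2$ (the lower endpoint of the interval in~\eqref{l3.3.2}) and $W(r_2)=M(r_1)/2-(W(r_1)-W(r_2))$, we need this to stay $\ge0$; if it would go negative we instead truncate, replacing $w$ by $\max\{w,0\}$ (legitimate via Lemma~\ref{l3.2} applied on $\omega$), and in the borderline case one shows directly that $M(r_1)/2$ itself already dominates the claimed lower bound times a constant. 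Throughout, the hypothesis $M(r_2)\le 2M(r_1)$ guarantees that wherever $u>w\ge M(r_1)/2$ on $\omega$ we have $u\le M(r_2)\le 2M(r_1)$, so $u(x)\in[M(r_1)/2,M(r_2)]$ and the bound $F(x,u)\ge{\mathcal F}$ is exactly available — this is why the two-sidedness of the interval in $\mathcal F$ matters.

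With $w$ in hand, I would verify the boundary condition $w\le u$ on $\partial\omega$: on the outer boundary $S_{r_2}\cap\Omega$ we have $w=W(r_2)\le M(r_1)/2\le u$ is \emph{not} automatic, so instead one arranges $W(r_2)=0$ by choosing the free additive constant, i.e. $W(\rho)=\int_\rho^{r_2}\varphi^{-1}(C_0{\mathcal F}(t-r_1))\,dt$, giving $w=0$ on $S_{r_2}$ and $w=0\le u$ there trivially, while on $S_{r_1}\cap\Omega$ we get $W(r_1)=\int_{r_1}^{r_2}\varphi^{-1}(C_0{\mathcal F}(t-r_1))\,dt\ge C(r_2-r_1)\varphi^{-1}((r_2-r_1){\mathcal F})$; on $\partial\Omega\cap B_{r_2}\setminus B_{r_1}$ both vanish by~\eqref{1.2}. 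Then Lemma~\ref{l3.1} (after checking the integrability hypotheses, which hold because $w$ is bounded and $C^1$ on the compact $\overline\omega$ minus the lower-order set where $W'=0$, and $u\in W_1^1\cap L_\infty$ there) yields $u\ge w$ a.e.\ in $\omega$, hence $M(r_2)\ge\operatorname*{ess\,sup}_{\Omega\cap S_{r_1}}u\ge W(r_1)$, and since $M(r_1)=M(r_1-0)$ is realized as an essential supremum over $\Omega\cap B_{r_1}$ one upgrades this, via the standard "move $r_1$ slightly inward" argument, to $M(r_2)-M(r_1)\ge M(r_2)-M(r_1)$... more carefully: apply the comparison with $W(r_1)=M(r_1)$ shifted down, i.e. compare $u$ with $w+M(r_1)-\varepsilon$ won't work since that changes $\mathcal L_\varphi$; rather, observe $u-w$ is a subsolution issue — the cleanest route is: $w+M(r_1)$ is still a supersolution of $\mathcal L_\varphi(\cdot)\le\mathcal F$ (adding a constant does not change $\mathcal L_\varphi$), it equals $M(r_1)\ge u$ on $S_{r_1}$ up to the $\operatorname{ess\,sup}$ subtlety handled by taking $r_1'\downarrow$...

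I expect the main obstacle to be precisely this last matching of the barrier to $u$ across $S_{r_1}$: $M(r_1)$ is an essential supremum over the \emph{ball}, not a pointwise trace on the sphere, so "$u\le M(r_1)$ on $S_{r_1}$" in the sense of~\eqref{3.1} needs the observation that $\max\{u-M(r_1),0\}$ restricted to $B_{r_1}\cap\Omega$ lies in $\stackrel{\scriptscriptstyle o}{W}{}_1^1$ (it is zero a.e.), combined with gluing across $S_{r_1}$; and dually, extracting the lower bound on $M(r_2)$ from "$u\ge w$ on the annulus" requires that $w$ attains values close to $W(r_1)$ on a set of positive measure near $S_{r_1}$, which is clear since $w$ is continuous. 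A secondary technical point is ensuring $\varphi^{-1}(C_0{\mathcal F}(t-r_1))$ is integrable and that the resulting $w\in W_1^1(\omega)$ with $\varphi(|\nabla w|)|\nabla w|\in L_1(\omega)$ — here $|\nabla w|=\varphi^{-1}(C_0{\mathcal F}(t-r_1))$ stays bounded on $\overline\omega$ so everything is finite — and, for the $\mathcal F=0$ case, the estimate degenerates to $M(r_2)-M(r_1)\ge0$, consistent with Remark~2's convention. Modulo these, the argument is the standard barrier comparison and the almost-semi-multiplicativity of $\varphi^{-1}$ does all the work in collapsing the integral to the stated closed form.
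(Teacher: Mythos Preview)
Your comparison is set up backwards. You build $w$ with $\mathcal{L}_\varphi w\le\mathcal{F}$ (a supersolution) and ask for $w\le u$ on $\partial\omega$, then claim Lemma~\ref{l3.1} gives $u\ge w$. But Lemma~\ref{l3.1} says: if $\mathcal{L}_\varphi u_1\ge\rho$, $\mathcal{L}_\varphi u_2\le\rho$, and $u_1\le u_2$ on $\partial\omega$, then $u_1\le u_2$ inside. Since $u$ is the \emph{sub}solution here, the only conclusion available from the lemma is an \emph{upper} bound $u\le w$, and for that you would need $u\le w$ on the boundary --- in particular $w|_{S_{r_2}}\ge M(r_2)$, which you never arrange. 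Your decreasing profile with $W(r_2)=0$ gives $(w+M(r_1))|_{S_{r_2}}=M(r_1)$, strictly below $M(r_2)$ whenever the latter is larger, so the boundary condition fails on $S_{r_2}$ and the comparison cannot close. The attempted fixes at the end (shifting by $M(r_1)$, taking $r_1'\downarrow r_1$) do not repair this: no additive shift of your $w$ can simultaneously dominate $u$ on $S_{r_2}$ and sit at $M(r_1)$ on $S_{r_1}$ without presupposing the inequality you are trying to prove.

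The paper avoids the trap by working on the full ball $B_{r_2}$ (after extending $u$ by zero via Lemma~\ref{l3.2}) rather than the annulus, and by taking an \emph{increasing} radial profile
$w(r)=\int_{r_1}^{r}\varphi^{-1}\bigl(\tfrac{\tau^n-r_1^n}{n\tau^{n-1}}\,\mathcal{F}\bigr)\,d\tau$,
extended by $0$ on $B_{r_1}$, which solves $\mathcal{L}_\varphi w=\mathcal{F}$ exactly in the annulus. The decisive step is a \emph{sliding} argument: one sets $v=\tilde w+c$ with $c=\operatorname*{ess\,sup}_{B_{r_2}}(\tilde u-\tilde w)$, so that $v\ge\tilde u$ holds \emph{by construction}, not by comparison. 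Lemma~\ref{l3.1} is then invoked only in a contradiction argument to show that this minimal $c$ forces $v|_{S_{r_2}}=M(r_2)$ exactly (were it larger, $v-\varepsilon$ would still dominate $\tilde u$ on $S_{r_2}$, hence on all of $B_{r_2}$ by the lemma, contradicting the minimality of $c$). From $v|_{S_{r_2}}=M(r_2)$ and $v|_{S_{r_1}}=c\ge M(r_1)$ one reads off $M(r_2)-M(r_1)\ge w(r_2)$. Your lower bound for the radial integral via almost semi-multiplicativity of $\varphi^{-1}$ is correct and is precisely how the paper estimates $w(r_2)$; it is the comparison architecture that must be replaced.
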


\begin{proof}
By Lemma~\ref{l3.2}, we have
$
    {\mathcal L}_\varphi \tilde u
    \ge
    \tilde F (x)
    \quad
    \mbox{in } {\mathbb R}^n,
$
where
$$
    \tilde u (x)
    =
    \left\{
        \begin{aligned}
            &
            u (x),
            &
            &
            x \in \Omega,
            \\
            &
            0,
            &
            &
            x \in {\mathbb R}^n \setminus \Omega,
        \end{aligned}
    \right.\quad {\text and}\quad
    \tilde F (x)
    =
    \left\{
        \begin{aligned}
            &
            F (x, u (x))
            &
            &
            x \in \Omega,
            \\
            &
            0,
            &
            &
            x \in {\mathbb R}^n \setminus \Omega.
        \end{aligned}
    \right.
$$
This obviously implies that
$
    {\mathcal L}_\varphi \tilde u
    \ge
    \rho (x, \tilde u)
    \quad
    \mbox{in } B_{r_2},
$
where
$$
    \rho (x, t)
    =
    \left\{
        \begin{aligned}
            &
            {\mathcal F},
            &
            &
            x \in B_{r_2} \setminus B_{r_1}
            \;
            \&
            \;
            t \ge M (r_1) / 2,
            \\
            &
            0,
            &
            &
            x \not\in B_{r_2} \setminus B_{r_1}
            \;
            \vee
            \;
            t < M (r_1) / 2,
        \end{aligned}
    \right.
$$

Let us further denote
$$
    w (r)
    =
    \int_{r_1}^r
    \varphi^{-1}
    \left(
        \frac{
            \tau^n - r_1^n
        }{
            n \tau^{n - 1}
        }
        {\mathcal F}
    \right)
    d\tau.
$$
By direct differentiation, it can be verified that $w$ satisfies the Cauchy problem
\begin{equation}
    \frac{
        1
    }{
        r^{n - 1}
    }
    \frac{d}{dr}
    \left(
        r^{n - 1}
        \varphi
        \left(
            \frac{d w}{d r}
        \right)
    \right)
    =
    {\mathcal F},
    \quad
    w (r_1)
    =
    \frac{d w}{d r} (r_1) = 0,
    \label{pl3.3.1}
\end{equation}
for the radial component of the operator $\Delta_\varphi$.
Let us shown that
\begin{equation}
    w (r_2)
    \ge
    C
    (r_2 - r_1)
    \varphi^{-1}
    (
        (r_2 - r_1)
        {\mathcal F}
    ).
    \label{pl3.3.2}
\end{equation}
Really, in accordance with Lagrange's theorem
$\tau^n - r_1^n = n \tau_*^{n - 1} (\tau - r_1)$
for some $\tau_* \in (r_1, \tau)$; therefore,
$$
    \frac{
        \tau^n - r_1^n
    }{
        n \tau^{n - 1}
    }
    \ge
    \frac{
        \tau - r_1
    }{
        \sigma^{n-1}
    }
$$
for all $\tau \in (r_1, r_2)$, whence it follows that
$$
    w (r_2)
    \ge
    \int_{r_1}^{r_2}
    \varphi^{-1}
    \left(
        \frac{
            \tau - r_1
        }{
            \sigma^{n-1}
        }
        {\mathcal F}
    \right)
    d\tau
    \ge
    \int_{
        (r_1 + r_2) / 2
    }^{
        r_2
    }
    \varphi^{-1}
    \left(
        \frac{
            \tau - r_1
        }{
            \sigma^{n-1}
        }
        {\mathcal F}
    \right)
    d\tau
    \ge
    \frac{r_2 - r_1}{2}
    \varphi^{-1}
    \left(
        \frac{
            r_2 - r_1
        }{
            2 \sigma^{n-1}
        }
        {\mathcal F}
    \right).
$$
Thus, to establish the validity of~\eqref{pl3.3.2}, it remains to note that
$$
    \varphi^{-1}
    \left(
        \frac{
            r_2 - r_1
        }{
            2 \sigma^{n-1}
        }
        {\mathcal F}
    \right)
    \ge
    C
    \varphi^{-1}
    (
        (r_2 - r_1)
        {\mathcal F}
    )
$$
in accordance with property~\eqref{1.8} of almost semi-multiplicative functions.

We put
$$
    v (x)
    =
    \tilde w (x)
    +
    \operatorname*{ess\,sup}\limits_{
        B_{r_2}
    }
    \,
    (\tilde u - \tilde w),
$$
where
$$
    \tilde w (x)
    =
    \left\{
        \begin{aligned}
            &
            w (|x|),
            &
            &
            x \in B_{r_2} \setminus B_{r_1},
            \\
            &
            0,
            &
            &
            x \in B_{r_1}.
        \end{aligned}
    \right.
$$
It can easily be see that
\begin{equation}
    v \ge \tilde u
    \quad
    \mbox{a.e. in } B_{r_2}
    \label{pl3.3.3}.
\end{equation}
Really, we have
$$
    \operatorname*{ess\,sup}\limits_{
        B_{r_2}
    }
    \,
    (\tilde u - \tilde w)
    \ge
    \tilde u (x) - \tilde w (x)
$$
for almost all $x \in B_{r_2}$. Hence,
$$
    v (x)
    =
    \tilde w (x)
    +
    \operatorname*{ess\,sup}\limits_{
        B_{r_2}
    }
    \,
    (\tilde u - \tilde w)
    \ge
    \tilde w (x)
    +
    \tilde u (x) - \tilde w (x)
    =
    \tilde u (x)
$$
for almost all $x \in B_{r_2}$.
Since $\tilde w$ is a non-negative function, it is also obvious that
\begin{equation}
    v (x)
    \ge
    \operatorname*{ess\,sup}\limits_{
        B_{r_2}
    }
    \,
    (\tilde u - \tilde w)
    \ge
    \operatorname*{ess\,sup}\limits_{
        B_{r_1}
    }
    \,
    (\tilde u - \tilde w)
    =
    M (r_1)
    \label{pl3.3.4}
\end{equation}
for all $x \in B_{r_2}$.

Let us establish the validity of the equality
\begin{equation}
    \left.
        v
    \right|_{
        S_{r_2}
    }
    =
    M (r_2).
    \label{pl3.3.5}
\end{equation}
Indeed, taking into account~\eqref{pl3.3.3}, we obtain
$$
    \left.
        v
    \right|_{
        S_{r_2}
    }
    =
    \sup_{
        B_{r_2}
    }
    v
    \ge
    \operatorname*{ess\,sup}\limits_{
        B_{r_2}
    }
    \tilde u
    =
    M (r_2);
$$
therefore, if~\eqref{pl3.3.5} is not valid, then
$
    \left.
        (v - \varepsilon)
    \right|_{
        S_{r_2}
    }
    >
    M (r_2)
$
for some real number $0 < \varepsilon < M (r_1) / 2$. In view
of~\eqref{pl3.3.1}, the function $\tilde  v = v - \varepsilon$ is a
weak solution of the equation
$$
    \Delta_\varphi \tilde v
    =
    \chi_{
        B_{r_2} \setminus B_{r_1}
    }
    (x)
    {\mathcal F}
    \quad
    \mbox{in } B_{r_2},
$$
where
$$
    \chi_{
        B_{r_2} \setminus B_{r_1}
    }
    (x)
    =
    \left\{
        \begin{aligned}
            &
            1,
            &
            &
            x \in B_{r_2} \setminus B_{r_1},
            \\
            &
            0,
            &
            &
            x \not\in B_{r_2} \setminus B_{r_1},
        \end{aligned}
    \right.
$$
is the characteristic function of the set $B_{r_2} \setminus B_{r_1}$.
At the same time, according to~\eqref{pl3.3.4}, we have
$
    \tilde v (x) \ge M (r_1) - \varepsilon > M (r_1) / 2
$
for all $x \in B_{r_2}$.
Thus, $\tilde v$ is a weak solution of the equation
$$
    \Delta_\varphi \tilde v
    =
    \rho (x, \tilde v)
    \quad
    \mbox{in } B_{r_2}
$$
and, applying Lemma~\ref{l3.1}, we arrive at the inequality
$
    \tilde u \le \tilde v
    \quad
    \mbox{a.e. in } B_{r_2}
$
or, in other words,
$$
    \tilde u (x)
    \le
    \tilde w (x)
    +
    \operatorname*{ess\,sup}\limits_{
        B_{r_2}
    }
    \,
    (\tilde u - \tilde w)
    -
    \varepsilon
$$
for almost all $x \in B_{r_2}$,
whence it follows that
$$
    \tilde u (x)
    -
    \tilde w (x)
    \le
    \operatorname*{ess\,sup}\limits_{
        B_{r_2}
    }
    \,
    (\tilde u - \tilde w)
    -
    \varepsilon
$$
for almost all $x \in B_{r_2}$.
This contradiction proves~\eqref{pl3.3.5}.

Since
$$
    \left.
        v
    \right|_{
        S_{r_1}
    }
    =
    \sup_{
        B_{r_1}
    }
    v
    \ge
    \operatorname*{ess\,sup}\limits_{
        B_{r_1}
    }
    \tilde u
    =
    M (r_1)
$$
in accordance with~\eqref{pl3.3.3}, equality~\eqref{pl3.3.5} yields
$$
    M (r_2) - M (r_1)
    \ge
    \left.
        v
    \right|_{
        S_{r_2}
    }
    -
    \left.
        v
    \right|_{
        S_{r_1}
    }
    =
    w (r_2).
$$
Combining this with~\eqref{pl3.3.2}, we complete the proof.
\end{proof}

\begin{lemma}\label{l3.4}
Let $r_* \le r_1 < r_2$ be real numbers  such that $r_2 \le \sigma
r_1$ and $M (r_2) \le 2 M (r_1)$. Then,
\begin{equation}
    \frac{
        M (r_2) - M (r_1)
    }{
        \varphi^{-1} (g (\tau))
    }
    \ge
    \frac{
        C
        \eta (r_2 - r_1)
    }{
        \varphi^{-1}
        \left(
            \frac{1}{f (r)}
        \right)
    }
    \label{l3.4.1}
\end{equation}
for all $r \in [r_2 / \sigma, \sigma r_1]$ and $\tau \in [M (r_1)/ 2, 2 M (r_2)]$.
\end{lemma}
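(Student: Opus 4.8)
The plan is to feed the lower bound of Lemma~\ref{l3.3} into a short chain of elementary estimates based on the scaling properties of almost semi-multiplicative functions. If $f (r) = 0$, the right-hand side of~\eqref{l3.4.1} vanishes by Remark~2, so assume $f (r) > 0$. Since $r \in [r_2/\sigma, \sigma r_1]$, we have $\sigma r \ge r_2$ and $r/\sigma \le r_1$, hence the annulus $\Omega \cap B_{r_2} \setminus B_{r_1}$ is contained in $\Omega_{r, \sigma}$, and~\eqref{1.5} gives
$$
    {\mathcal F}
    \ge
    f (r)
    \inf_{
        t \in [M (r_1)/2, M (r_2)]
    }
    g (t)
$$
for the quantity ${\mathcal F}$ of~\eqref{l3.3.2}. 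Because $M (r_1) \ge M (r_*) > 0$, the interval $[M (r_1)/2, M (r_2)]$ is a compact subset of $(0, \infty)$, so this infimum is positive.

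Next, put $\delta = r_2 - r_1$ and use $\eta (\delta) = \delta / \varphi^{-1} (1/\delta)$ to rewrite~\eqref{l3.3.1} as
$$
    M (r_2) - M (r_1)
    \ge
    C
    \eta (\delta)
    \varphi^{-1}
    \left(
        \frac{1}{\delta}
    \right)
    \varphi^{-1}
    (\delta {\mathcal F})
    \ge
    C
    \eta (\delta)
    \varphi^{-1} ({\mathcal F}),
$$
the last step being~\eqref{1.7} applied to the almost semi-multiplicative function $\varphi^{-1}$ with $t_1 = 1/\delta$, $t_2 = \delta {\mathcal F}$. Dividing by $\varphi^{-1} (g (\tau))$, the claim~\eqref{l3.4.1} reduces to the pointwise inequality
$$
    \varphi^{-1} ({\mathcal F})\,
    \varphi^{-1}
    \left(
        \frac{1}{f (r)}
    \right)
    \ge
    C\,
    \varphi^{-1} (g (\tau)),
    \quad
    \tau \in [M (r_1)/2, 2 M (r_2)].
$$
Applying~\eqref{1.7} to $\varphi^{-1}$ once more (with $t_1 = {\mathcal F}$, $t_2 = 1/f (r)$), then using monotonicity of $\varphi^{-1}$ and the bound on ${\mathcal F}$, the left-hand side is at least $c\, \varphi^{-1} ({\mathcal F}/f (r)) \ge c\, \varphi^{-1} (\inf_t g (t)) = c \inf_t h (t)$, where $h = \varphi^{-1} \circ g$ is almost semi-multiplicative (a superposition of such functions), the infima are over $[M (r_1)/2, M (r_2)]$, and the interchange of $\varphi^{-1}$ with the infimum uses that $\varphi^{-1}$ is continuous and increasing.

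So everything reduces to comparing $h$ on $[M (r_1)/2, M (r_2)]$ with $h (\tau)$ for $\tau \in [M (r_1)/2, 2 M (r_2)]$. The key observation is that, because $M (r_2) \le 2 M (r_1)$, for every such $t$ and $\tau$ the ratio $\kappa = t/\tau$ lies in the \emph{fixed} interval $[1/8, 4]$, no matter how large $M (r_1)$ is. Writing $t = \kappa \tau$ and applying~\eqref{1.7} to $h$ with $t_1 = t$, $t_2 = 1/\kappa$ gives $h (t) \ge c_h\, h (\tau) / h (1/\kappa)$ with $1/\kappa \in [1/4, 8]$. Since $g \in L_{\infty, loc} (0, \infty)$ and $\varphi^{-1}$ is continuous, $h$ is essentially bounded on $[1/4, 8]$, and averaging the inequality $h (s) \le c_h^{-1} h (s \lambda) h (1/\lambda)$ over $\lambda \in [1, 2]$ upgrades this to $\sup_{[1/4, 8]} h < \infty$. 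Hence $h (t) \ge C\, h (\tau)$ with $C$ depending only on $\varphi$ and $g$; taking the infimum over $t$ and chaining the estimates above yields~\eqref{l3.4.1}, all constants depending only on $\sigma$, $g$, $\varphi$, $n$, and $M (r_*)$.

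The one delicate point is precisely this last comparison: the intervals $[M (r_1)/2, M (r_2)]$ and $[M (r_1)/2, 2 M (r_2)]$ drift off to infinity together with $M (r_1)$, so one cannot simply invoke boundedness of $g$ on a fixed compact set. What rescues the argument is that only the ratio $t/\tau$ enters, that the hypothesis $M (r_2) \le 2 M (r_1)$ confines this ratio to $[1/8, 4]$, and that almost semi-multiplicativity is scale-invariant, so a single local estimate on $[1/4, 8]$ suffices uniformly.
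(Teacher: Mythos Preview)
Your proof is correct and follows essentially the same strategy as the paper's: both rely on the inclusion $\Omega \cap B_{r_2}\setminus B_{r_1}\subset\Omega_{r,\sigma}$, Lemma~\ref{l3.3}, the almost semi-multiplicativity of $\varphi^{-1}$ to separate the factors, and the observation that the ratio of any $t\in[M(r_1)/2,M(r_2)]$ to any $\tau\in[M(r_1)/2,2M(r_2)]$ lies in a fixed compact interval because $M(r_2)\le 2M(r_1)$. The only organizational difference is that the paper first proves $g(t)\ge Cg(\tau)$ via~\eqref{1.8} (so that ${\mathcal F}\ge Cf(r)g(\tau)$ \emph{before} invoking Lemma~\ref{l3.3}), whereas you postpone the comparison and carry it out for the composite $h=\varphi^{-1}\circ g$ at the end; your extra averaging step to pass from $\operatorname{ess\,sup}_{[1/4,8]}h$ to a genuine pointwise bound is a nice touch that the paper glosses over.
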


\begin{proof}
Take some $r \in [r_2 / \sigma, \sigma r_1]$  and $\tau \in [M (r_1)
/ 2, 2 M (r_2)]$. From~\eqref{1.5}, it follows that
\begin{equation}
    F (x, t)
    \ge
    f (r)
    g (t)
    \label{pl3.4.1}
\end{equation}
for all
$x \in \Omega \cap B_{r_2} \setminus B_{r_1}$
and
$t \in [M (r_1) / 2, M (r_2)]$.
Putting $\tau_1 = \tau$ and $\tau_2 = t / \tau$ in~\eqref{1.8}, we have
$
    g (t)
    \ge
    {
        C g (\tau)
    }/{
        g (\tau / t)
    }.
$
Since the function $g$ is bounded from above on any compact subset
of the interval $(0, \infty)$ and, moreover,
$\tau / t \in [1 / 4, 8]$ for all $t \in [M (r_1) / 2, M (r_2)]$,
the last inequality yields
$
    g (t) \ge C g (\tau)
$
for all $t \in [M (r_1) / 2, M (r_2)]$.
Thus,~\eqref{pl3.4.1} implies the estimate
$
    {\mathcal F} \ge C f (r) g (\tau),
$
where ${\mathcal F}$ is given by~\eqref{l3.3.2}.
Combining this with formula~\eqref{l3.3.1} of Lemma~\ref{l3.3}, we obtain
\begin{equation}
    M (r_2) - M (r_1)
    \ge
    C
    (r_2 - r_1)
    \varphi^{-1}
    (
        (r_2 - r_1)
        f (r)
        g (\tau)
    ).
    \label{pl3.4.2}
\end{equation}
As $\varphi^{-1}$ is an almost semi-multiplicative function, one can assert that
$$
    \varphi^{-1}
    (
        (r_2 - r_1)
        f (r)
        g (\tau)
    )
    \ge
    \frac{
        C
        \varphi^{-1}
        (
            g (\tau)
        )
    }{
        \varphi^{-1}
        \left(
            \frac{
                1
            }{
                (r_2 - r_1)
                f (r)
            }
        \right)
    }
$$
and
$$
    \varphi^{-1}
    \left(
        \frac{
            1
        }{
            (r_2 - r_1)
            f (r)
        }
    \right)
    \le
    C
    \varphi^{-1}
    \left(
        \frac{
            1
        }{
            r_2 - r_1
        }
    \right)
    \varphi^{-1}
    \left(
        \frac{
            1
        }{
            f (r)
        }
    \right).
$$
Thus,~\eqref{pl3.4.2} leads to the inequality
$$
    M (r_2) - M (r_1)
    \ge
    \frac{
        C
        (r_2 - r_1)
        \varphi^{-1}
        (
            g (\tau)
        )
    }{
        \varphi^{-1}
        \left(
            \frac{
                1
            }{
                r_2 - r_1
            }
        \right)
        \varphi^{-1}
        \left(
            \frac{
                1
            }{
                f (r)
            }
        \right)
    }
$$
which is obviously equivalent to~\eqref{l3.4.1}.
\end{proof}

The following statement is simple but useful.

\begin{lemma}\label{l3.5}
Let $a > 0$ and $b > 0$ be real numbers. Then,
\begin{equation}
    \eta (a) b
    \ge
    C
    \eta
    \left(
        \frac{
            a
        }{
            \eta^{-1}
            \left(
                \frac{1}{b}
            \right)
        }
    \right).
    \label{l3.5.1}
\end{equation}
\end{lemma}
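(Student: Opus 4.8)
The plan is to trade~\eqref{l3.5.1}, which mixes $\eta$, $\eta^{-1}$ and a product, for a single instance of the almost semi-multiplicativity of $\varphi^{-1}$. First I would set $s = \eta^{-1}(1/b)$; this is legitimate because $\eta$ is an increasing one-to-one map of $(0,\infty)$ onto itself, so $s$ is a well-defined positive number with $\eta(s) = 1/b$, i.e. $b = 1/\eta(s)$. After this substitution the claimed estimate becomes
\begin{equation*}
    \frac{\eta(a)}{\eta(s)}
    \ge
    C\,\eta\!\left(\frac{a}{s}\right).
\end{equation*}

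Next I would unfold the definition $\eta(t) = t/\varphi^{-1}(1/t)$ on both sides. On the left,
\begin{equation*}
    \frac{\eta(a)}{\eta(s)}
    =
    \frac{a}{s}\cdot\frac{\varphi^{-1}(1/s)}{\varphi^{-1}(1/a)},
\end{equation*}
while $\eta(a/s) = (a/s)/\varphi^{-1}(s/a)$. Cancelling the common positive factor $a/s$, the inequality to be proved reduces to
\begin{equation*}
    \varphi^{-1}\!\left(\frac{1}{s}\right)\varphi^{-1}\!\left(\frac{s}{a}\right)
    \ge
    C\,\varphi^{-1}\!\left(\frac{1}{a}\right).
\end{equation*}

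Finally, this last inequality is precisely~\eqref{1.7} for the almost semi-multiplicative function $\varphi^{-1}$, applied with $t_1 = 1/s$ and $t_2 = s/a$, since then $t_1 t_2 = 1/a$; one may take $C = c$, the constant from~\eqref{1.7} (consistent with the convention that $C$ is allowed to depend on $\varphi$). There is essentially no obstacle here: the only points requiring care are that no restriction on the range of $a$ or $b$ is needed, because~\eqref{1.7} holds for all positive arguments, and that one keeps track of the correct direction of the almost-semi-multiplicative inequality — a product of $\varphi^{-1}$-values bounds $\varphi^{-1}$ of the product from below, not from above, which is exactly what the chain of equivalences above requires.
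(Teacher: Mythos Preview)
Your proof is correct and follows essentially the same approach as the paper: both unfold the definition of $\eta$ and reduce~\eqref{l3.5.1} to a single instance of the almost semi-multiplicativity of $\varphi^{-1}$. The paper routes this through an auxiliary function $h(t)=t\,\varphi^{-1}(t)=1/\eta(1/t)$ and the intermediate bound $\eta(\alpha\beta)\le C\,\eta(\alpha)\,h(\beta)$ before specializing to $\alpha=a$, $\beta=h^{-1}(b)$, whereas your substitution $s=\eta^{-1}(1/b)$ reaches the same reduction more directly.
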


\begin{proof}
Taking into account the relation
$$
    \varphi^{-1}
    \left(
        \frac{1}{\alpha \beta}
    \right)
    \ge
    \frac{
        C
        \varphi^{-1}
        \left(
            \frac{1}{\alpha}
        \right)
    }{
        \varphi^{-1} (\beta)
    },
$$
we obtain
\begin{equation}
    \eta (\alpha \beta)
    =
    \frac{
        \alpha \beta
    }{
        \varphi^{-1}
        \left(
            \frac{1}{\alpha \beta}
        \right)
    }
    \le
    \frac{
        C
        \alpha
        \beta
        \varphi^{-1} (\beta)
    }{
        \varphi^{-1}
        \left(
            \frac{1}{\alpha}
        \right)
    }
    \label{pl3.5.1}
\end{equation}
for all real numbers $\alpha > 0$ and $\beta > 0$.
Let us denote
$
    h (t)
    =
    t
    \varphi^{-1} (t),
    \quad
    t > 0,
$
It can be verified that
$
    h (t)
    =
    \frac{
        1
    }{
        \eta
        \left(
            {1}/{t}
        \right)
    },
    \quad
    t > 0.
$
We also note that $h$ is an increasing  one-to-one mapping of the
set $(0, \infty)$ into itself with
$$
    h^{-1} (t)
    =
    \frac{
        1
    }{
        \eta^{-1}
        \left(
            {1}/{t}
        \right)
    },
    \quad
    t > 0.
$$
Formula~\eqref{pl3.5.1} can obviously be written in the form
$
    \eta (\alpha \beta) \le C \eta (\alpha) h (\beta),
$
whence, putting $\alpha = a$ and $\beta = h^{-1} (b)$,  we
immediately arrive at~\eqref{l3.5.1}.
\end{proof}

\begin{lemma}\label{l3.6}
Let $r_* \le r_1 < r_2$ be real numbers  such that $r_2 =
\sigma^{1/2} r_1$ and $M (r_2) \le 2 M (r_1)$. Then,
$$
    \int_{
        M (r_1)
    }^{
        M (r_2)
    }
    \frac{
        dt
    }{
        \varphi^{-1} (g (t))
    }
    \ge
    C
    \int_{
        r_1
    }^{
        r_2
    }
    \frac{
        \eta (r)
        dr
    }{
        r
        \varphi^{-1}
        \left(
            \frac{
                1
            }{
                f (r)
            }
        \right)
    }.
$$
\end{lemma}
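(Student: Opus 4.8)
The plan is to obtain Lemma~\ref{l3.6} directly from the single pointwise estimate of Lemma~\ref{l3.4}, applied to the pair $(r_1,r_2)$, followed by two elementary integrations: one in the ``$t$-variable'' to cancel the increment $M(r_2)-M(r_1)$, and one in the ``$r$-variable'' to manufacture the integral on the right-hand side.

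First I would check that the hypotheses of Lemma~\ref{l3.4} hold for the given $r_1,r_2$: indeed $r_1 < r_2 = \sigma^{1/2} r_1 \le \sigma r_1$ and, by assumption, $M(r_2) \le 2 M(r_1)$. Moreover $r_2/\sigma = \sigma^{-1/2} r_1 \le r_1$ and $\sigma r_1 \ge \sigma^{1/2} r_1 = r_2$, so $[r_1,r_2] \subset [r_2/\sigma,\sigma r_1]$, while trivially $[M(r_1),M(r_2)] \subset [M(r_1)/2,2M(r_2)]$. Hence Lemma~\ref{l3.4} yields, for every $r \in [r_1,r_2]$ and every $t \in [M(r_1),M(r_2)]$,
$$
    \frac{M(r_2)-M(r_1)}{\varphi^{-1}(g(t))}
    \ge
    \frac{C\,\eta(r_2-r_1)}{\varphi^{-1}(1/f(r))}.
$$
Note that $[M(r_1),M(r_2)]$ is a compact subset of $(0,\infty)$, since $M(r_1) \ge M(r_*) > 0$ and $M(r_2) < \infty$; together with $f,g \in L_{\infty,loc}$ and $\inf_K g > 0$ on compact $K$ this guarantees that all integrals below are finite and their integrands measurable, the integrand $1/\varphi^{-1}(1/f(r))$ being understood as $0$ wherever $f(r)=0$. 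If $M(r_1)=M(r_2)$, then both sides of the inequality of the lemma are $0$: the left-hand side because $\int_{M(r_1)}^{M(r_1)}(\cdot)\,dt = 0$, and the right-hand side because, $\eta(r_2-r_1)$ being positive, the displayed estimate forces $f(r)=0$ on $[r_2/\sigma,\sigma r_1] \supset [r_1,r_2]$; so there is nothing to prove, and from now on I assume $M(r_1) < M(r_2)$.

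Then I would divide the displayed inequality by $M(r_2)-M(r_1) > 0$ and integrate in $t$ over $[M(r_1),M(r_2)]$, whose length is exactly $M(r_2)-M(r_1)$; this cancels the factor and gives, for every $r \in [r_1,r_2]$,
$$
    \frac{1}{\varphi^{-1}(1/f(r))}
    \le
    \frac{1}{C\,\eta(r_2-r_1)}
    \int_{M(r_1)}^{M(r_2)} \frac{dt}{\varphi^{-1}(g(t))}.
$$
Finally I would multiply by $\eta(r)/r$ and integrate in $r$ over $[r_1,r_2]$. Using that $\eta$ is increasing, that $r_1 \le r \le r_2$, that $r_2-r_1=(\sigma^{1/2}-1)r_1$, and property~\eqref{1.10} (which bounds $\eta(r_2)=\eta(\sigma^{1/2}r_1) \le C\eta(r_1)$ and $\eta(r_2-r_1)=\eta((\sigma^{1/2}-1)r_1) \ge C\eta(r_1)$), one gets $\int_{r_1}^{r_2}\eta(r)\,r^{-1}\,dr \le \eta(r_2)(r_2-r_1)/r_1 \le C\,\eta(r_2-r_1)$, whence
$$
    \int_{r_1}^{r_2} \frac{\eta(r)\,dr}{r\,\varphi^{-1}(1/f(r))}
    \le
    \frac{1}{C\,\eta(r_2-r_1)}
    \left( \int_{M(r_1)}^{M(r_2)} \frac{dt}{\varphi^{-1}(g(t))} \right)
    \int_{r_1}^{r_2} \frac{\eta(r)}{r}\,dr
    \le
    C \int_{M(r_1)}^{M(r_2)} \frac{dt}{\varphi^{-1}(g(t))},
$$
which is exactly the inequality claimed.

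The argument is no more than a repackaging of Lemma~\ref{l3.4}, so I do not expect a genuine obstacle; the points requiring care are the degenerate case $M(r_1)=M(r_2)$ (to avoid division by zero), the verification that every constant produced depends only on $\sigma,g,\varphi,n,M(r_*)$, and the routine but repeated use of~\eqref{1.10} to pass between $\eta$ evaluated at comparable arguments such as $r_1$, $r_2$ and $r_2-r_1$.
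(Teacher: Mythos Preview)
Your proof is correct and follows essentially the same route as the paper's: both arguments apply Lemma~\ref{l3.4} to the pair $(r_1,r_2)$, use~\eqref{1.10} to pass between $\eta(r_2-r_1)$ and $\eta(r)$ (since $r_2-r_1=(\sigma^{1/2}-1)r_1$ is comparable to every $r\in[r_1,r_2]$), and then perform trivial $\inf$/$\sup$ or length-of-interval integrations in $t$ and in $r$. The only cosmetic differences are that the paper replaces $\eta(r_2-r_1)$ by $C\eta(r)$ first and then bounds the two integrals by $(M(r_2)-M(r_1))\cdot\inf_\tau(\cdot)$ and $\tfrac12\ln\sigma\cdot\sup_r(\cdot)$, thereby avoiding any division; you instead divide by $M(r_2)-M(r_1)$ and handle the degenerate case $M(r_1)=M(r_2)$ separately, which is equally valid.
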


\begin{proof}
Taking into account Lemma~\ref{l3.4} and the  fact that $\eta$ is an
increasing function satisfying relation~\eqref{1.10}, we obtain
$$
    \frac{
        M (r_2) - M (r_1)
    }{
        \varphi^{-1} (g (\tau))
    }
    \ge
    \frac{
        C
        \eta (r)
    }{
        \varphi^{-1}
        \left(
            \frac{1}{f (r)}
        \right)
    }
$$
for all $r \in [r_1, r_2]$ and $\tau \in [M (r_1), M (r_2)]$.
Combining this with the evident inequalities
$$
    \int_{
        M (r_1)
    }^{
        M (r_2)
    }
    \frac{
        dt
    }{
        \varphi^{-1} (g (t))
    }
    \ge
    \inf_{
        \tau \in [M (r_1), M (r_2)]
    }
    \frac{
        M (r_2) - M (r_1)
    }{
        \varphi^{-1} (g (\tau))
    }
$$
and
$$
    \frac{1}{2}
    \ln \sigma
    \sup_{
        r \in [r_1, r_2]
    }
    \frac{
        \eta (r)
    }{
        \varphi^{-1}
        \left(
            \frac{1}{f (r)}
        \right)
    }
    \ge
    \int_{
        r_1
    }^{
        r_2
    }
    \frac{
        \eta (r)
        dr
    }{
        r
        \varphi^{-1}
        \left(
            \frac{
                1
            }{
                f (r)
            }
        \right)
    },
$$
we complete the proof.
\end{proof}

\begin{lemma}\label{l3.7}
Let the conditions of Lemma~\ref{l3.6} be valid, then
$$
    \int_{
        M (r_1)
    }^{
        M (r_2)
    }
    \frac{
        dt
    }{
        \varphi^{-1} (g (t))
    }
    \ge
    C
    \int_{
        r_1
    }^{
        r_2
    }
    \eta
    \left(
        \frac{
            r
        }{
            \eta^{-1}
            \left(
                \varphi^{-1}
                \left(
                    \frac{1}{f (r)}
                \right)
            \right)
        }
    \right)
    \frac{dr}{r}.
$$
\end{lemma}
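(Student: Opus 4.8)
The idea is to feed the pointwise estimate of Lemma~\ref{l3.5} into the lower bound already established in Lemma~\ref{l3.6}.

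First I would fix $r \in [r_1, r_2]$ with $f(r) > 0$ and apply Lemma~\ref{l3.5} with
$$
    a = r,
    \qquad
    b = \frac{1}{\varphi^{-1}\left(\frac{1}{f(r)}\right)}.
$$
Since $\varphi^{-1}$ is an increasing one-to-one mapping of $[0, \infty)$ onto itself, the number $\varphi^{-1}(1/f(r))$ is finite and strictly positive, so $b$ is a well-defined positive real number; also $a > 0$ because $r \ge r_1 \ge r_* > 0$. With this choice $\eta^{-1}(1/b) = \eta^{-1}\left(\varphi^{-1}\left(1/f(r)\right)\right)$, and hence~\eqref{l3.5.1} reads
$$
    \frac{\eta(r)}{\varphi^{-1}\left(\frac{1}{f(r)}\right)}
    \ge
    C\,
    \eta\!\left(
        \frac{r}{\eta^{-1}\left(\varphi^{-1}\left(\frac{1}{f(r)}\right)\right)}
    \right).
$$
For the remaining values of $r$, namely those with $f(r) = 0$, both sides of this inequality vanish by the convention of Remark~2, so it holds trivially for all $r \in [r_1, r_2]$.

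Next I would divide the displayed inequality by $r$ and integrate over $[r_1, r_2]$, which gives
$$
    \int_{r_1}^{r_2}
    \frac{\eta(r)\,dr}{r\,\varphi^{-1}\left(\frac{1}{f(r)}\right)}
    \ge
    C
    \int_{r_1}^{r_2}
    \eta\!\left(
        \frac{r}{\eta^{-1}\left(\varphi^{-1}\left(\frac{1}{f(r)}\right)\right)}
    \right)
    \frac{dr}{r}.
$$
Combining this with the estimate of Lemma~\ref{l3.6} (and absorbing the two occurrences of $C$ into a single constant of the same type) yields the assertion.

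There is no real obstacle here: the only points deserving a line of verification are that the substitution $b = 1/\varphi^{-1}(1/f(r))$ produces an admissible, i.e.\ finite and positive, value for Lemma~\ref{l3.5}, and that the degenerate case $f(r) = 0$ is accounted for by the convention of Remark~2. The constant $C$ in the conclusion depends, as everywhere in this section, only on $\sigma$, $g$, $\varphi$, $n$, and $M(r_*)$.
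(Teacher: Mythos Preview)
Your proof is correct and follows the same route as the paper: apply Lemma~\ref{l3.5} with $a=r$ and $b=1/\varphi^{-1}(1/f(r))$ to obtain the pointwise bound, integrate over $[r_1,r_2]$, and then invoke Lemma~\ref{l3.6}. Your additional remarks on the admissibility of $b$ and on the convention for $f(r)=0$ are welcome but do not change the argument.
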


\begin{proof}
Applying Lemma~\ref{l3.5} with $a = r$ and
$$
    b
    =
    \frac{
        1
    }{
        \varphi^{-1}
        \left(
            \frac{
                1
            }{
                f (r)
            }
        \right)
    },
$$
we obtain
$$
    \frac{
        \eta (r)
    }{
        \varphi^{-1}
        \left(
            \frac{
                1
            }{
                f (r)
            }
        \right)
    }
    \ge
    C
    \eta
    \left(
        \frac{
            r
        }{
            \eta^{-1}
            \left(
                \varphi^{-1}
                \left(
                    \frac{1}{f (r)}
                \right)
            \right)
        }
    \right)
$$
for all $r \in [r_1, r_2]$, whence it follows that
$$
    \int_{
        r_1
    }^{
        r_2
    }
    \frac{
        \eta (r)
        dr
    }{
        r
        \varphi^{-1}
        \left(
            \frac{
                1
            }{
                f (r)
            }
        \right)
    }
    \ge
    C
    \int_{
        r_1
    }^{
        r_2
    }
    \eta
    \left(
        \frac{
            r
        }{
            \eta^{-1}
            \left(
                \varphi^{-1}
                \left(
                    \frac{1}{f (r)}
                \right)
            \right)
        }
    \right)
    \frac{dr}{r}.
$$
Thus, to complete the proof, it remains to use Lemma~\ref{l3.6}.
\end{proof}

\begin{lemma}\label{l3.8}
Let $r_* \le r_1 < r_2$ be real numbers such that $r_2 =
\sigma^{1/2} r_1$ and $M (r_2) \ge 2 M (r_1)$. Then,
\begin{equation}
    \int_{
        M (r_1)
    }^{
        M (r_2)
    }
    \eta^{-1}
    \left(
        \frac{
            t
        }{
            \varphi^{-1} (g (t))
        }
    \right)
    \frac{dt}{t}
    \ge
    C
    \int_{
        r_1
    }^{
        r_2
    }
    \Phi (r)
    \frac{dr}{r},
    \label{l3.8.1}
\end{equation}
where
\begin{equation}
    \Phi (r)
    =
    \sup_{
        s \in (\sigma^{- 1/2} r, \, r \sigma^{1/2})
    }
    \,
    \frac{
        s
    }{
        \eta^{-1}
        \left(
            \varphi^{-1}
            \left(
                \frac{1}{f (s)}
            \right)
        \right)
    }.
    \label{l3.8.2}
\end{equation}
\end{lemma}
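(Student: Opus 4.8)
The plan is to reduce the inequality to a pointwise estimate in which the right–hand side is a single quotient $s/\eta^{-1}(\varphi^{-1}(1/f(s)))$, and then to prove that estimate by cutting $[r_1,r_2]$ into consecutive blocks on each of which $M$ at most doubles, applying Lemma~\ref{l3.3} to each block, converting the result with Lemma~\ref{l3.5} and the semi‑multiplicativity relations \eqref{1.8}, \eqref{1.10}, \eqref{1.11}, and summing.

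For the reduction, note that since $r_2=\sigma^{1/2}r_1$ one has $\int_{r_1}^{r_2}\Phi(r)\,dr/r\le\tfrac12(\ln\sigma)\sup_{r\in[r_1,r_2]}\Phi(r)$, and as $r$ ranges over $[r_1,r_2]$ the windows $(\sigma^{-1/2}r,\,r\sigma^{1/2})$ sweep out exactly $(\sigma^{-1/2}r_1,\,\sigma r_1)$, so that $\sup_{r\in[r_1,r_2]}\Phi(r)=\sup_{s\in(\sigma^{-1/2}r_1,\sigma r_1)}s/\eta^{-1}(\varphi^{-1}(1/f(s)))$. Hence it is enough to show that for every $s\in(\sigma^{-1/2}r_1,\sigma r_1)$ with $f(s)>0$ (for $f(s)=0$ the corresponding term vanishes, cf. Remark~2),
\[
    \int_{M(r_1)}^{M(r_2)}\eta^{-1}\!\Bigl(\frac{t}{\varphi^{-1}(g(t))}\Bigr)\frac{dt}{t}\ \ge\ \frac{Cs}{\eta^{-1}(\varphi^{-1}(1/f(s)))}.
\]
Fix such an $s$. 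Since $s/\sigma\le r_1<r_2\le\sigma s$, every shell $\Omega\cap B_{\rho'}\setminus\overline{B_{\rho}}$ with $r_1\le\rho<\rho'\le r_2$ lies inside $\Omega_{s,\sigma}$, so \eqref{1.5} bounds $F$ from below there by $f(s)g(t)$. Repeating the computation of the proof of Lemma~\ref{l3.4} with $s$ in place of $r$ and $[\rho,\rho']$ in place of $[r_1,r_2]$, I get that whenever $M(\rho')\le 2M(\rho)$ (then automatically $\rho'\le\sigma^{1/2}\rho\le\sigma\rho$, so Lemma~\ref{l3.3} is applicable) the estimate
\[
    \frac{\tau}{\varphi^{-1}(g(\tau))}\ \ge\ \frac{C\,\eta(\rho'-\rho)}{\varphi^{-1}(1/f(s))}\qquad(\tau\in[M(\rho),2M(\rho)])
\]
holds; applying $\eta^{-1}$, then Lemma~\ref{l3.5} with $a=\rho'-\rho$ and $b=1/\varphi^{-1}(1/f(s))$, and finally \eqref{1.11}, this turns into the per‑block inequality $\eta^{-1}(\tau/\varphi^{-1}(g(\tau)))\ge C(\rho'-\rho)/\eta^{-1}(\varphi^{-1}(1/f(s)))$ for all $\tau\in[M(\rho),2M(\rho)]$.

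To chain these, I would put $N=\lfloor\log_2(M(r_2)/M(r_1))\rfloor\ge1$, $\mu_j=2^jM(r_1)$ and $\rho_j=\sup\{\rho\in[r_1,r_2]:M(\rho)\le\mu_j\}$ for $0\le j\le N$, so that $r_1=\rho_0\le\rho_1\le\cdots\le\rho_N\le r_2$; using $M(r)=M(r-0)$ one has $M(\rho_j)\le\mu_j=2\mu_{j-1}$ and $M(\rho_{j-1})\le\mu_{j-1}$, hence $M(\rho_j)\le 2M(\rho_{j-1})$ on each non‑degenerate block $[\rho_{j-1},\rho_j]$ (the degenerate ones caused by jumps of $M$ are discarded). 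On such a block the per‑block inequality holds for all $\tau\in[M(\rho_{j-1}),M(\rho_j)]\subset[M(\rho_{j-1}),2M(\rho_{j-1})]$; integrating it against $dt/t$ over $[M(\rho_{j-1}),M(\rho_j)]$ (an interval of $\log$‑length at least $\ln2$) and summing over $j$, together with $\int_{M(r_1)}^{M(r_2)}\ge\int_{\mu_0}^{\mu_N}$, gives
\[
    \int_{M(r_1)}^{M(r_2)}\eta^{-1}\!\Bigl(\frac{t}{\varphi^{-1}(g(t))}\Bigr)\frac{dt}{t}\ \ge\ \frac{C}{\eta^{-1}(\varphi^{-1}(1/f(s)))}\sum_{j=1}^{N}(\rho_j-\rho_{j-1})\ =\ \frac{C\,(\rho_N-r_1)}{\eta^{-1}(\varphi^{-1}(1/f(s)))}.
\]
Since $r_2-r_1=(\sigma^{1/2}-1)r_1>(\sigma^{1/2}-1)\sigma^{-1}s$, it then remains to see that the leftover length $r_2-\rho_N$ is bounded by a fixed fraction of $r_2-r_1$. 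On $[\rho_N,r_2]$ one has $M(r_2)<2\mu_N\le2M(\rho_N)$, so Lemma~\ref{l3.3} applies once more and (after replacing $\mathcal F$ by $Cf(s')g(\mu_N)$ for a suitable $s'$ comparable to $r_1$) forces $r_2-\rho_N$ to be small unless $f$ is small on that shell; in the latter case those $s$ contribute negligibly to $\sup_{s}\,s/\eta^{-1}(\varphi^{-1}(1/f(s)))$, and one repeats the argument with $\rho_{N-1}$ in place of $\rho_N$. I expect this control of the last, incomplete block — and the bookkeeping needed to accommodate the upward jumps of $M$ in the subdivision — to be the only genuinely delicate point; once that is dispatched, the rest is just the semi‑multiplicativity manipulations already packaged in Lemmas~\ref{l3.3}–\ref{l3.5}.
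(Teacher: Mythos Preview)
Your overall strategy --- subdivide $[r_1,r_2]$ into blocks on which $M$ roughly doubles, apply Lemmas~\ref{l3.3}/\ref{l3.4} and~\ref{l3.5} on each, and sum --- is indeed the paper's strategy, and your preliminary reduction of $\int_{r_1}^{r_2}\Phi(r)\,dr/r$ to a single supremum in $s$ is a legitimate simplification.

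The genuine gap is in the forward subdivision $\rho_j=\sup\{\rho:M(\rho)\le\mu_j\}$. From $M(\rho_j)\le\mu_j=2\mu_{j-1}$ and $M(\rho_{j-1})\le\mu_{j-1}$ you \emph{cannot} conclude $M(\rho_j)\le 2M(\rho_{j-1})$: the inequalities point the wrong way. If $M$ has a large upward jump at some point $p$, then many consecutive $\rho_j$ coincide with $p$, and on the first non-degenerate block after the jump the ratio $M(\rho_j)/M(\rho_{j-1})$ can be arbitrarily large, so Lemma~\ref{l3.3} does not even apply there. Moreover, your two claims ``$[M(\rho_{j-1}),M(\rho_j)]\subset[M(\rho_{j-1}),2M(\rho_{j-1})]$'' and ``log-length $\ge\ln 2$'' are mutually inconsistent unless $M(\rho_j)=2M(\rho_{j-1})$ exactly; in general your construction only yields $M(\rho_j)\ge\mu_{j-1}\ge M(\rho_{j-1})$, so the log-length can be arbitrarily small. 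Without a uniform lower bound on these log-lengths the summation gives $\sum_j(\text{log-length}_j)(\rho_j-\rho_{j-1})$ on the right, not $\sum_j(\rho_j-\rho_{j-1})$, and the argument collapses.

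The paper fixes precisely this point by subdividing \emph{backward}: $s_1=r_2$ and $s_{i+1}=\inf\{r\in[r_1,s_i]:M(s_i)\le 2M(r)\}$. Left-continuity of $M$ then yields both $M(s_i)\le 2M(s_{i+1}+0)$ (so Lemma~\ref{l3.4} applies on $(s_{i+1},s_i]$) and, crucially, $M(s_{i+1})\le M(s_i)/2$ for $i\le k-2$. The second inequality is exactly what your construction lacks: it makes the intervals $[M(s_i)/2,M(s_i)]$ disjoint with fixed log-length $\ln 2$, and one integrates the per-block estimate over these rather than over $[M(s_{i+1}),M(s_i)]$. The last block $[r_1,s_{k-1}]$ is then handled by choosing an auxiliary level $m$ with $M(r_1)\le m$, $M(r_1+0)/2\le m\le M(r_1+0)$ and $M(s_{k-1})\le 2m\le M(r_2)$, producing one more interval $[m,2m]\subset[M(r_1),M(r_2)]$ on which the per-block bound holds. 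Your proposed treatment of the leftover segment (``forces $r_2-\rho_N$ to be small unless $f$ is small'') does not work either: $s$ has already been fixed with $f(s)>0$, and Lemma~\ref{l3.3} bounds $M(r_2)-M(\rho_N)$ from below, not $r_2-\rho_N$ from above.
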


\begin{proof}
Consider a finite sequence of real  numbers $s_1 > s_2 > \ldots >
s_k$ constructed as follows. Let us take $s_1 = r_2$. Assume further
that $s_i$ is already known for some positive integer $i$. If $s_i =
r_1$, we put $k  = i$ and stop; otherwise we take
$
    s_{i + 1}
    =
    \inf
    \{
        r \in [r_1, s_i]
        :
        M (s_i)
        \le
        2
        M (r)
    \}.
$
It is obvious that
$$
    M (s_i) \le 2 M (s_{i+1} + 0),
    \quad
    i = 1, 2, \ldots, k - 1.
$$
Since $M (r) = M (r - 0)$ for all $r \in (r_1, r_2]$, we also have
\begin{equation}
    2 M (s_{i+1}) \le M (s_i),
    \quad
    i = 1, 2, \ldots, k - 2.
    \label{pl3.8.1}
\end{equation}

By Lemma~\ref{l3.4},
$$
    \frac{
        M (s_i) - M (s_{i+1} + 0)
    }{
        \varphi^{-1} (g (\tau))
    }
    \ge
    \frac{
        C
        \eta (s_i - s_{i+1})
    }{
        \varphi^{-1}
        \left(
            \frac{1}{f (r)}
        \right)
    }
$$
for all $r \in (s_i / \sigma, \sigma s_{i+1})$ and $\tau \in (M (s_{i+1} + 0) / 2, 2 M (s_i))$,
$s = 1, 2, \ldots, k - 1$.
At the same time, taking into account Lemma~\ref{l3.5}, we obtain
$$
    \frac{
        \eta (s_i - s_{i+1})
    }{
        \varphi^{-1}
        \left(
            \frac{
                1
            }{
                f (r)
            }
        \right)
    }
    \ge
    C
    \eta
    \left(
        \frac{
            s_i - s_{i+1}
        }{
            \eta^{-1}
            \left(
                \varphi^{-1}
                \left(
                    \frac{1}{f (r)}
                \right)
            \right)
        }
    \right);
$$
therefore, one can assert that
$$
    \frac{
        M (s_i) - M (s_{i+1} + 0)
    }{
        \varphi^{-1} (g (\tau))
    }
    \ge
    C
    \eta
    \left(
        \frac{
            s_i - s_{i+1}
        }{
            \eta^{-1}
            \left(
                \varphi^{-1}
                \left(
                    \frac{1}{f (r)}
                \right)
            \right)
        }
    \right),
$$
for all $r \in (s_i / \sigma, \sigma s_{i+1})$ and $\tau \in (M
(s_{i+1} + 0) / 2, 2 M (s_i))$, $s = 1, 2, \ldots, k - 1$.
By~\eqref{1.11} and the fact that $\eta^{-1}$  is an increasing
function, this implies the inequality
$$
    \eta^{-1}
    \left(
        \frac{
            M (s_i) - M (s_{i+1} + 0)
        }{
            \varphi^{-1} (g (\tau))
        }
    \right)
    \ge
    \frac{
        C (s_i - s_{i+1})
    }{
        \eta^{-1}
        \left(
            \varphi^{-1}
            \left(
                \frac{1}{f (r)}
            \right)
        \right)
    }
$$
for all $r \in (s_i / \sigma, \sigma s_{i+1})$ and  $\tau \in (M
(s_{i+1} + 0) / 2, 2 M (s_i))$, $s = 1, 2, \ldots, k - 1$, whence it
follows that
\begin{equation}
    \eta^{-1}
    \left(
        \frac{
            M (s_i)
        }{
            2 \varphi^{-1} (g (\tau))
        }
    \right)
    \ge
    C
    (s_i - s_{i+1})
    \frac{
        \Phi (r)
    }{
        r
    }
    \label{pl3.8.2}
\end{equation}
for all $r \in (s_{i+1}, s_i)$ and $\tau \in (M (s_{i+1} + 0) / 2, 2 M (s_i))$,
$s = 1, 2, \ldots, k - 1$.

Combining~\eqref{pl3.8.2} with the evident inequalities
$$
    \int_{
        M (s_i) / 2
    }^{
        M (s_i)
    }
    \eta^{-1}
    \left(
        \frac{
            t
        }{
            \varphi^{-1} (g (t))
        }
    \right)
    \frac{dt}{t}
    \ge
    \ln 2
    \inf_{
        \tau \in (M (s_i) / 2, M (s_i))
    }
    \eta^{-1}
    \left(
        \frac{
            M (s_i)
        }{
            2 \varphi^{-1} (g (\tau))
        }
    \right)
$$
and
\begin{equation}
    (s_i - s_{i+1})
    \sup_{
        r \in (s_{i+1}, s_i)
    }
    \frac{
        \Phi (r)
    }{
        r
    }
    \ge
    \int_{
        s_{i+1}
    }^{
        s_i
    }
    \Phi (r)
    \frac{dr}{r},
    \label{pl3.8.3}
\end{equation}
we arrive at the estimate
$$
    \int_{
        M (s_i) / 2
    }^{
        M (s_i)
    }
    \eta^{-1}
    \left(
        \frac{
            t
        }{
            \varphi^{-1} (g (t))
        }
    \right)
    \frac{dt}{t}
    \ge
    C
    \int_{
        s_{i+1}
    }^{
        s_i
    }
    \Phi (r)
    \frac{dr}{r}
$$
which, in view of~\eqref{pl3.8.1}, yields
$$
    \int_{
        M (s_{i+1})
    }^{
        M (s_i)
    }
    \eta^{-1}
    \left(
        \frac{
            t
        }{
            \varphi^{-1} (g (t))
        }
    \right)
    \frac{dt}{t}
    \ge
    C
    \int_{
        s_{i+1}
    }^{
        s_i
    }
    \Phi (r)
    \frac{dr}{r},
    \quad
    i = 1, 2, \ldots, k - 2.
$$
Summing the last expression over all $1 \le i \le k - 2$, we obtain
\begin{equation}
    \int_{
        M (s_{k-1})
    }^{
        M (r_2)
    }
    \eta^{-1}
    \left(
        \frac{
            t
        }{
            \varphi^{-1} (g (t))
        }
    \right)
    \frac{dt}{t}
    \ge
    C
    \int_{
        s_{k-1}
    }^{
        r_2
    }
    \Phi (r)
    \frac{dr}{r}.
    \label{pl3.8.4}
\end{equation}

Let us further take a real number $m \ge M (r_1)$ such that
$M (r_1 + 0) / 2 \le m \le M (r_1 + 0)$
and
$M (s_{k-1}) \le 2 m \le M (r_2)$.
Such a real number $m$ obviously exists.

It can be seen that
$$
    \int_{
        m
    }^{
        2 m
    }
    \eta^{-1}
    \left(
        \frac{
            t
        }{
            \varphi^{-1} (g (t))
        }
    \right)
    \frac{dt}{t}
    \ge
    \ln 2
    \inf_{
        \tau \in (m, 2m)
    }
    \eta^{-1}
    \left(
        \frac{
            M (s_{k-1})
        }{
            2 \varphi^{-1} (g (\tau))
        }
    \right);
$$
therefore, taking into  account~\eqref{pl3.8.2} and~\eqref{pl3.8.3}
with $s = k - 1$, we have
$$
    \int_{
        m
    }^{
        2 m
    }
    \eta^{-1}
    \left(
        \frac{
            t
        }{
            \varphi^{-1} (g (t))
        }
    \right)
    \frac{dt}{t}
    \ge
    C
    \int_{
        r_1
    }^{
        s_{k-1}
    }
    \Phi (r)
    \frac{dr}{r}.
$$
Finally, summing the last inequality  and~\eqref{pl3.8.4}, we arrive
at~\eqref{l3.8.1}.
\end{proof}

We need the following known result.

\begin{lemma}\label{l3.9}
Let $\mu > 1$ and $\nu > 1$ be some real numbers and $H : [0,
\infty) \to [0, \infty)$ be a convex function with $\eta (0) = 0$.
Then,
$$
    H
    \left(
        \int_{t_1}^{t_2}
        \Phi (t)
        \frac{dt}{t}
    \right)
    \ge
    A
    \int_{t_1}^{t_2}
    H (B \Psi (t))
    \frac{dt}{t}
$$
for any measurable function $\Phi : (0, \infty) \to [0, \infty)$
and positive real numbers $t_1$ and $t_2$ such that $\mu t_1 \le t_2$, where
$
    \Psi (t)
    =
    \operatorname*{ess\,inf}_{
        (t / \nu, \nu t)
        \cap
        (t_1, t_2)
    }
    \Phi
$
and the constants $A > 0$ and $B > 0$ depend only on $\mu$ and $\nu$.
\end{lemma}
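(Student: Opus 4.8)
The plan is to pass to the logarithmic variable $s=\ln t$, which turns the weight $dt/t$ into Lebesgue measure and the dilation intervals $(t/\nu,\nu t)$ into translates of a fixed interval, and then to use the elementary fact that a nonnegative convex function vanishing at the origin (the ``$\eta(0)=0$'' in the statement is evidently a misprint for $H(0)=0$) is both nondecreasing and superadditive. After the substitution, writing $\tau_i=\ln t_i$, $\widetilde\Phi(s)=\Phi(e^s)$, $\delta=\ln\nu$ and $L=\ln\mu$, one has $\int_{t_1}^{t_2}\Phi(t)\,dt/t=\int_{\tau_1}^{\tau_2}\widetilde\Phi(s)\,ds$, the hypothesis $\mu t_1\le t_2$ reads $\tau_2-\tau_1\ge L>0$, and
$$
    \Psi(e^s)
    =
    \operatorname*{ess\,inf}_{(s-\delta,\,s+\delta)\cap(\tau_1,\tau_2)}\widetilde\Phi
    =:
    \widetilde\Psi(s),
    \qquad
    \int_{t_1}^{t_2}H(B\Psi(t))\,\frac{dt}{t}
    =
    \int_{\tau_1}^{\tau_2}H(B\widetilde\Psi(s))\,ds,
$$
so it suffices to prove $H\bigl(\int_{\tau_1}^{\tau_2}\widetilde\Phi\bigr)\ge A\int_{\tau_1}^{\tau_2}H(B\widetilde\Psi)$ under $\tau_2-\tau_1\ge L$.

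Next I would record two consequences of convexity of $H$ together with $H\ge0$, $H(0)=0$: for $0\le a\le b$, convexity gives $H(a)\le\frac{a}{b}H(b)\le H(b)$, so $H$ is nondecreasing on $[0,\infty)$; and for $a,b\ge0$, applying $H(x)\le\frac{x}{x+y}H(x+y)$ with $(x,y)=(a,b)$ and with $(x,y)=(b,a)$ and adding yields $H(a)+H(b)\le H(a+b)$, whence by induction $\sum_j H(q_j)\le H\bigl(\sum_j q_j\bigr)$ for any finite family $q_j\ge0$.

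Then, put $\ell=\frac{1}{4}\min(\ln\mu,\ln\nu)$, a constant depending only on $\mu$ and $\nu$, and note that $2\ell<\delta$ while $\tau_2-\tau_1\ge L\ge4\ell$. Divide $(\tau_1,\tau_2)$ into finitely many consecutive subintervals $P_1,\dots,P_m$ of equal length $\lambda\in[\ell,2\ell]$ (possible since $\tau_2-\tau_1\ge4\ell$). For every $s\in P_j$ each point of $P_j$ lies within distance $\lambda<\delta$ of $s$, so $P_j\subseteq(s-\delta,s+\delta)\cap(\tau_1,\tau_2)$ and therefore
$$
    \widetilde\Psi(s)
    \le
    \operatorname*{ess\,inf}_{P_j}\widetilde\Phi
    \le
    \frac{1}{\lambda}\int_{P_j}\widetilde\Phi
    \le
    \frac{1}{\ell}\,q_j,
    \qquad
    q_j:=\int_{P_j}\widetilde\Phi .
$$
Choosing $B=\ell$ and using that $H$ is nondecreasing, $\int_{P_j}H(B\widetilde\Psi)\le\lambda\,H(q_j)\le2\ell\,H(q_j)$; summing over $j$ and invoking superadditivity,
$$
    \int_{\tau_1}^{\tau_2}H(B\widetilde\Psi)
    \le
    2\ell\sum_{j=1}^{m}H(q_j)
    \le
    2\ell\,H\Bigl(\sum_{j=1}^{m}q_j\Bigr)
    =
    2\ell\,H\Bigl(\int_{\tau_1}^{\tau_2}\widetilde\Phi\Bigr).
$$
This is the required inequality with $A=1/(2\ell)$ and $B=\ell$, and reverting to the variable $t$ finishes the proof.

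The only place where the structure of $\Psi$ as a \emph{local} infimum of $\Phi$ is genuinely used is the estimate $\widetilde\Psi|_{P_j}\le\lambda^{-1}\int_{P_j}\widetilde\Phi$: a bare ``reverse Jensen'' inequality $\int H(B\Phi)\,dt/t\lesssim H(\int\Phi\,dt/t)$ is false (it already fails for $H(x)=x^2$ on a short interval), and it is precisely the infimum taken over dilation intervals of a fixed logarithmic width that lets $\Psi$ be replaced, block by block, by the mean of $\Phi$, after which superadditivity glues the blocks together. The remaining point is purely bookkeeping — one must pick $\ell$ small relative to \emph{both} $\ln\mu$ and $\ln\nu$ so that the blocks fit inside $(\tau_1,\tau_2)$ and each block is shorter than $\ln\nu$, which is where the dependence of the constants on $\mu$ and $\nu$ enters.
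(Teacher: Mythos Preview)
Your proof is correct. The paper itself does not prove this lemma at all; it merely cites it as \cite[Lemma~3.1]{meSM}, so there is no ``paper's own proof'' to compare against. Your argument is a clean, self-contained substitute: the logarithmic change of variables reduces the problem to an additive one on an interval of length at least $\ln\mu$, the two properties you extract from convexity with $H(0)=0$ (monotonicity and superadditivity) are exactly what is needed, and the partition into blocks of length between $\ell$ and $2\ell$ with $\ell=\tfrac14\min(\ln\mu,\ln\nu)$ makes each block sit inside every relevant window $(s-\ln\nu,s+\ln\nu)$, so that $\widetilde\Psi$ on a block is dominated by the block average of $\widetilde\Phi$. The constants $A=1/(2\ell)$ and $B=\ell$ fall out transparently and depend only on $\mu$ and $\nu$, as required. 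Your closing remark correctly identifies the step where the local-infimum structure of $\Psi$ is indispensable and why a naive reverse Jensen would fail.
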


Lemma~\ref{l3.9} is proved in~\cite[Lemma~3.1]{meSM}.

\begin{proof}[Proof of Theorem~\ref{t2.1}]
We put $r_i = \sigma^{i/2} r_*$, $i = 0,1,2,\ldots$.
In so doing, let $\Xi_1$ be the set of positive integers $i$ such that
$2 M (r_{i-1}) \ge M (r_i)$ and $\Xi_2$ be the set of all other positive integers.

By Lemma~\ref{l3.7}, we obtain
$$
    \int_{
        M (r_{i-1})
    }^{
        M (r_i)
    }
    \frac{
        dt
    }{
        \varphi^{-1} (g (t))
    }
    \ge
    C
    \int_{
        r_{i-1}
    }^{
        r_i
    }
    \eta
    \left(
        \frac{
            r
        }{
            \eta^{-1}
            \left(
                \varphi^{-1}
                \left(
                    \frac{1}{f (r)}
                \right)
            \right)
        }
    \right)
    \frac{dr}{r}
$$
for all $i \in \Xi_1$, whence it follows that
\begin{equation}
    \int_{
        M (r_*)
    }^\infty
    \frac{
        dt
    }{
        \varphi^{-1} (g (t))
    }
    \ge
    C
    \sum_{i \in \Xi_1}
    \int_{
        r_{i-1}
    }^{
        r_i
    }
    \eta
    \left(
        \frac{
            r
        }{
            \eta^{-1}
            \left(
                \varphi^{-1}
                \left(
                    \frac{1}{f (r)}
                \right)
            \right)
        }
    \right)
    \frac{dr}{r}.
    \label{pt2.1.1}
\end{equation}
Since $\varphi^{-1}$ and $g$ are almost  semi-multiplicative
functions and, moreover,~\eqref{1.10} and~\eqref{1.11} hold, we have
$$
    \inf_{
        \tau \in (t / 2, 2 t)
    }
    \eta^{-1}
    \left(
        \frac{
            \tau
        }{
            \varphi^{-1} (g (\tau))
        }
    \right)
    \ge
    C
    \eta^{-1}
    \left(
        \frac{
            t
        }{
            \varphi^{-1} (g (t))
        }
    \right)
$$
for all real numbers $t > M (r_*)$. Consequently,  Lemma~\ref{l3.9}
implies that
$$
    \eta
    \left(
        \int_{
            M (r_*)
        }^\infty
        \eta^{-1}
        \left(
            \frac{
                t
            }{
                \varphi^{-1} (g (t))
            }
        \right)
        \frac{dt}{t}
    \right)
    \ge
    C
    \int_{
        M (r_*)
    }^\infty
    \frac{
        dt
    }{
        \varphi^{-1} (g (t))
    }.
$$
Combining this with~\eqref{pt2.1.1}, we derive
\begin{equation}
    \eta
    \left(
        \int_{
            M (r_*)
        }^\infty
        \eta^{-1}
        \left(
            \frac{
                t
            }{
                \varphi^{-1} (g (t))
            }
        \right)
        \frac{dt}{t}
    \right)
    \ge
    C
    \sum_{i \in \Xi_1}
    \int_{
        r_{i-1}
    }^{
        r_i
    }
    \eta
    \left(
        \frac{
            r
        }{
            \eta^{-1}
            \left(
                \varphi^{-1}
                \left(
                    \frac{1}{f (r)}
                \right)
            \right)
        }
    \right)
    \frac{dr}{r}.
    \label{pt2.1.2}
\end{equation}

In its turn, from Lemma~\ref{l3.8}, it follows that
\begin{equation}
    \int_{
        M (r_{i-1})
    }^{
        M (r_i)
    }
    \eta^{-1}
    \left(
        \frac{
            t
        }{
            \varphi^{-1} (g (t))
        }
    \right)
    \frac{dt}{t}
    \ge
    C
    \int_{
        r_{i-1}
    }^{
        r_i
    }
    \Phi (r)
    \frac{dr}{r}
    \label{pt2.1.3}
\end{equation}
for all $i \in \Xi_2$, where $\Phi$ is given by~\eqref{l3.8.2}.
This immediately yields
\begin{equation}
    \int_{
        M (r_*)
    }^\infty
    \eta^{-1}
    \left(
        \frac{
            t
        }{
            \varphi^{-1} (g (t))
        }
    \right)
    \frac{dt}{t}
    \ge
    C
    \sum_{i \in \Xi_2}
    \int_{
        r_{i-1}
    }^{
        r_i
    }
    \Phi (r)
    \frac{dr}{r}.
    \label{pt2.1.4}
\end{equation}
At the same time, we have
$$
    \inf_{
        s \in (\sigma^{- 1/2} r, \, r \sigma^{1/2})
    }
    \Phi (s)
    \ge
    \frac{
        r
    }{
        \eta^{-1}
        \left(
            \varphi^{-1}
            \left(
                \frac{1}{f (r)}
            \right)
        \right)
    }
$$
for all $r \in (r_*, \infty)$. Hence,  Lemma~\ref{l3.9} allows us to
assert that
$$
    \eta
    \left(
        \sum_{i \in \Xi_2}
        \int_{
            r_{i-1}
        }^{
            r_i
        }
        \Phi (r)
        \frac{dr}{r}
    \right)
    \ge
    \sum_{i \in \Xi_2}
    \eta
    \left(
        \int_{
            r_{i-1}
        }^{
            r_i
        }
        \Phi (r)
        \frac{dr}{r}
    \right)
    \ge
    C
    \sum_{i \in \Xi_2}
    \int_{
        r_{i-1}
    }^{
        r_i
    }
    \eta
    \left(
        \frac{
            r
        }{
            \eta^{-1}
            \left(
                \varphi^{-1}
                \left(
                    \frac{1}{f (r)}
                \right)
            \right)
        }
    \right)
    \frac{dr}{r}.
$$
Combining this with~\eqref{pt2.1.4}, we obtain
$$
    \eta
    \left(
        \int_{
            M (r_*)
        }^\infty
        \eta^{-1}
        \left(
            \frac{
                t
            }{
                \varphi^{-1} (g (t))
            }
        \right)
        \frac{dt}{t}
    \right)
    \ge
    C
    \sum_{i \in \Xi_2}
    \int_{
        r_{i-1}
    }^{
        r_i
    }
    \eta
    \left(
        \frac{
            r
        }{
            \eta^{-1}
            \left(
                \varphi^{-1}
                \left(
                    \frac{1}{f (r)}
                \right)
            \right)
        }
    \right)
    \frac{dr}{r}.
$$
Summing the last formula and~\eqref{pt2.1.2}, one can conclude that
$$
    \eta
    \left(
        \int_{
            M (r_*)
        }^\infty
        \eta^{-1}
        \left(
            \frac{
                t
            }{
                \varphi^{-1} (g (t))
            }
        \right)
        \frac{dt}{t}
    \right)
    \ge
    C
    \int_{
        r_*
    }^\infty
    \eta
    \left(
        \frac{
            r
        }{
            \eta^{-1}
            \left(
                \varphi^{-1}
                \left(
                    \frac{1}{f (r)}
                \right)
            \right)
        }
    \right)
    \frac{dr}{r}.
$$
This obviously contradicts conditions~\eqref{t2.1.1}
and~\eqref{t2.1.2}. Since all the previous statements have been
proven under the assumption that $M (r_*) > 0$, our contradiction
means that $u = 0$ a.e. in $\Omega$.
\end{proof}

\begin{proof}[Proof of Theorem~\ref{t2.2}]
Let the real numbers $r_i$, $i = 0,1,2,\ldots$,  and the sets
$\Xi_1$ and $\Xi_2$ be the same as in the proof of
Theorem~\ref{t2.1}. By Lemma~\ref{l3.6}, we have
$$
    \int_{
        M (r_{i-1})
    }^{
        M (r_i)
    }
    \frac{
        dt
    }{
        \varphi^{-1} (g (t))
    }
    \ge
    C
    \int_{
        r_{i-1}
    }^{
        r_i
    }
    \frac{
        \eta (r)
        dr
    }{
        r
        \varphi^{-1}
        \left(
            \frac{
                1
            }{
                f (r)
            }
        \right)
    }
$$
for all $i \in \Xi_1$. In its turn, Lemma~\ref{l3.8} implies
estimate~\eqref{pt2.1.3} for all $i \in \Xi_2$, where $\Phi$ is
given by~\eqref{l3.8.2}, whence in accordance with the evident
inequality
$$
    \Phi (r)
    \ge
    \frac{
        r
    }{
        \eta^{-1}
        \left(
            \varphi^{-1}
            \left(
                \frac{1}{f (r)}
            \right)
        \right)
    }
$$
it follows that
$$
    \int_{
        M (r_{i-1})
    }^{
        M (r_i)
    }
    \eta^{-1}
    \left(
        \frac{
            t
        }{
            \varphi^{-1} (g (t))
        }
    \right)
    \frac{dt}{t}
    \ge
    C
    \int_{
        r_{i-1}
    }^{
        r_i
    }
    \frac{
        dr
    }{
        \eta^{-1}
        \left(
            \varphi^{-1}
            \left(
                \frac{1}{f (r)}
            \right)
        \right)
    }
$$
for all $i \in \Xi_2$.
Thus, one can assert that
\begin{align*}
    &
    \int_{
        M (r_{i-1})
    }^{
        M (r_i)
    }
    \frac{
        dt
    }{
        \varphi^{-1} (g (t))
    }
    +
    \int_{
        M (r_{i-1})
    }^{
        M (r_i)
    }
    \eta^{-1}
    \left(
        \frac{
            t
        }{
            \varphi^{-1} (g (t))
        }
    \right)
    \frac{dt}{t}
    \\
    &
    \qquad
    \ge
    C
    \int_{
        r_{i-1}
    }^{
        r_i
    }
    \min
    \left\{
        \frac{
            \eta (r)
        }{
            r
            \varphi^{-1}
            \left(
                \frac{
                    1
                }{
                    f (r)
                }
            \right)
        },
        \frac{
            1
        }{
            \eta^{-1}
            \left(
                \varphi^{-1}
                \left(
                    \frac{
                        1
                    }{
                        f (r)
                    }
                \right)
            \right)
        }
    \right\}
    dr
\end{align*}
for all $i = 1,2,\ldots$.
Summing the last estimate over all positive integers $i$, we obtain
\begin{align*}
    &
    \int_{
        M (r_*)
    }^\infty
    \frac{
        dt
    }{
        \varphi^{-1} (g (t))
    }
    +
    \int_{
        M (r_*)
    }^\infty
    \eta^{-1}
    \left(
        \frac{
            t
        }{
            \varphi^{-1} (g (t))
        }
    \right)
    \frac{dt}{t}
    \\
    &
    \qquad
    \ge
    C
    \int_{
        r_*
    }^\infty
    \min
    \left\{
        \frac{
            \eta (r)
        }{
            r
            \varphi^{-1}
            \left(
                \frac{
                    1
                }{
                    f (r)
                }
            \right)
        },
        \frac{
            1
        }{
            \eta^{-1}
            \left(
                \varphi^{-1}
                \left(
                    \frac{
                        1
                    }{
                        f (r)
                    }
                \right)
            \right)
        }
    \right\}
    dr,
\end{align*}
which contradicts conditions~\eqref{t2.1.1},  \eqref{t2.2.1},
and~\eqref{t2.2.2}. Since the reason for the contradiction is our
assumption that $M (r_*) > 0$, we have $u = 0$ a.e. in $\Omega$.
\end{proof}


\end{document}